\documentclass{amsart}
\usepackage{color}

\vfuzz2pt 
\hfuzz2pt 

\newtheorem{thm}{Theorem}[section]

\newtheorem{cor}[thm]{Corollary}
\newtheorem{lem}[thm]{Lemma}
\newtheorem{prop}[thm]{Proposition}
\newtheorem{defn}[thm]{Definition}
\newtheorem{rem}[thm]{Remark}

\newcommand{\norm}[1]{\left\Vert#1\right\Vert}
\newcommand{\abs}[1]{\left\vert#1\right\vert}
\newcommand{\set}[1]{\left\{#1\right\}}

\newcommand{\pfrac}[2]{\frac{\partial #1}{\partial #2}}

\begin{document}                                                                                   
\title{Ricci flow on surfaces with conical singularities, II}

\numberwithin{equation}{section}
\author{Hao Yin}

\address{Hao Yin, 
School of Mathematical Science, University of Science and Technology of China, Hefei, China}
\email{haoyin@ustc.edu.cn}

\begin{abstract}
  In this paper, we study the (normalized) Ricci flow on surfaces with conical singularities. Long time existence is proved for cone angle smaller than $2\pi$. In this case, convergence results are obtained if the Euler number is nonpositive. 
\end{abstract}
\subjclass{AMS 53C44} \keywords{Ricci flow,
Conical singularity}
\thanks{Research supported by NSFC 11101272.}

 \maketitle

 \section{Introduction}\label{sec:intro}

 This is a continuation of our previous work on Ricci flow on surfaces with conical singularities \cite{mine}. For smooth surfaces, in 1982, Hamilton \cite{Ham} used his Ricci flow to prove the (normalized) Ricci flow converges to a metric of constant curvature in a given conformal class. Chow \cite{Ben}, Chen, Lu and Tian \cite{CLT} removed various technical conditions to show that Ricci flow can be used as a tool to prove the Uniformization theorem. 

 Let $S$ be a smooth Riemann surface and $p\in S$. A metric $g$ on $S$ is said to have a conical singularity of order $\beta(\beta>-1)$, or of angle $\theta=(2\pi(\beta+1))$ at $p$, if in a neighborhood $U$,
 \begin{equation*}
   g=w(z)\abs{z}^{2\beta}\abs{dz}^2,
 \end{equation*}
 where $z$ is a conformal coordinate in $U$ with $z(p)=0$ and $w$ is some continuous positive function. In general, consider a divisor $\beta=\sum_{i=1}^n \beta_i p_i$ and a conformal metric $g$ on $S$ which has a conical singularity of order $\beta_i$ at $p_i$. The set of all such metrics is a singular conformal class, which we denote by $(S,\beta)$. 
 
 We are interested in generalizing the theory about Ricci flow to surfaces with conical singularities. In particular, we hope Ricci flow could be a useful tool in the study of canonical metric in the conformal class $(S,\beta)$. There are already several papers devoted to this subject, for example, \cite{LuoTian}, \cite{threePoint}, \cite{Troyanov}. It suffices for us to note here that not all $(S,\beta)$ admits a metric of constant curvature and it is  interesting to find a `canonical' metric in $(S,\beta)$ for general $S$ and $\beta$. { After a first version of this paper was submitted for publication, there are substential progress made in this direction. The existence of canonical metrics is related to some stability notion. We refer to \cite{MRS} and \cite{Song} in this direction.} 

 { The concial metric that we discuss is not complete. Geometric flows starting from incomplete metrics are discussed by many other authors, for example, M. Simon \cite{MS1,MS2}, E.Bahuaud and B. Vertman \cite{BV}, Topping \cite{To} and G. Giesen and P. Topping \cite{GT1}. In \cite{MS1,MS2}, a conical singularity is smoothed out by the Ricci flow to get a smooth solution for $t>0$. In \cite{To} and \cite{GT1}, the metric becomes complete noncompact instantaneously. Hence, the singularity is gone as long as $t>0$ in the above mentioned papers. However, in this paper, as in \cite{mine} and \cite{BV}, the singularity remains there along the flow.

 To preserve the singularity along the flow, a special set of analysis tools is needed. Such analysis features a uniform degeneration of ellipticity near the singularity. Rather complete theory for such linear operators has been developed for various purposes, for example, Melrose \cite{Melrose}, Mazzeo \cite{Mazzeo}, Cheeger \cite{Cheeger} and Schulze \cite{Sch}. The microlocal analysis method from \cite{Melrose, Mazzeo} is generalized to parabolic equations in \cite{Moo, BDB}. Recently, motivated by his program in K\"ahler geometry, Donaldson \cite{Donaldson} proved a Schauder estimate in this setting. This approach is taken by Chen and Wang \cite{new,CW2} to study K\"ahler Ricci flow on K\"ahler manifold singular along a smooth divisor. Besides the analysis theory mentioned above, this paper together with its part one \cite{mine} provide a set of results and techniques which supports the study of semi-linear parabolic equation on surfaces with conical singularities. In contrast with the above mentioned analysis theory, the method we use here is elementary and completely self-contained. Hopefully, it is of some interests for readers with less analysis background.
}

In \cite{mine}, the author showed that there is a reasonable notion of normalized Ricci flow despite that the metric is not complete at the cone tips. For a `good' initial value, there is a family of metrics $g(t), t\in [0,T]$ in the conformal class $(S,\beta)$ satisfying on $S\setminus \set{p_i}$
 \begin{equation}\label{eqn:nrf}
   \pfrac{g}{t}=(r-R)g.
 \end{equation}
 Here $R$ is the scalar curvature of $g(t)$ and $r$ is some constant. Moreover, the Gauss-Bonnet formula remains true for $g(t)$ and the volume of $g(t)$ does not depend on $t$. Precisely, we have
 \begin{equation*}
	 \int_{S\setminus \set{p_i}} K_g dV_g =2\pi \tilde{\chi} =\frac{r}{2} V(g_t),
 \end{equation*}
 where $K_g$ is the Gauss curvature, $V(g_t)$ is the total area of $g_t$ and $\tilde{\chi}=\chi(S)+\sum_{i=1}^n \beta_i$ is the Euler number of $(S,\beta)$. 

 Due to technical difficulties, in \cite{mine}, the author was not able to show any long time existence result. It was not even clear whether the curvature is bounded along the flow. The main purpose of this paper is to solve these problems. In fact, we reprove the local existence result by iterating in a smaller function space. The solution obtained this way automatically has bounded Gauss curvature. We then discuss long time existence and convergence result. We follow closely the argument in \cite{Ham}. With the presence of conical singularity, we must be careful in the application of maximum principle. It turns out that for cone singularities with $\beta_i<0$, some arguments in \cite{Ham} work. 


 To introduce our main result, we set up some notions. Throughout this paper, we choose a background cone metric $\tilde{g}$ in the conformal class $(S,\beta)$ which is smooth away from the singularity and is the standard flat cone metric in small neighborhoods of the singularities. That is, if $p$ is a singular point, then there exists some complex coordinate $z$ compatible to the complex structure of $S$ near $p$, then
 \begin{equation*}
   \tilde{g}=\abs{z}^{2\beta} \abs{dz}^2.
 \end{equation*}
 We denote the Laplace of $\tilde{g}$ by $\tilde{\triangle}$, the Gauss curvature by $\tilde{K}$, the gradient by $\tilde{\nabla}$ and the volume form by $d\tilde{V}$. The initial metric of the normalized Ricci flow is denoted by $g_0=e^{2u_0}\tilde{g}$  and the solution by $g(x,t)$. $K_t$ and $dV_t$ are the Gauss curvature and volume element of $g_t$ for $t\geq 0$.

 Our result is
 \begin{thm}
   \label{thm:main}
   Assume both $u_0$ and $K_0$ are in $C^{0,\alpha} (S,\beta)$ and $u_0, K_0$ and $\triangle K_0$ are bounded with bounded Dirichlet energy. Then there exists a local solution to (\ref{eqn:nrf}) defined on $[0,T]$. If we choose $r$ properly, the volume of $g(t)$ will be a constant and Gauss-Bonnet theorem remains true for $t>0$. Moreover,
   
   (1) if $K_0<c<0$, then the solution exists for all $t$ and converges to a metric of constant negative curvature.

   (2) if $\beta_i<0$ for all $i$, then the solution exists for all $t$.

   (3) if $\beta_i<0$ for all $i$ and $\chi (S,\beta)\leq 0$, then the solution converges to a metric of constant curvature.
 \end{thm}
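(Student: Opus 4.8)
The plan is to follow Hamilton's treatment of the normalized Ricci flow on surfaces, watching every application of the maximum principle at the cone points. Writing $g_t=e^{2u}\tilde g$, equation (\ref{eqn:nrf}) becomes the scalar equation $\partial_t u=\frac12(r-R)=\frac r2-K_t$ with $K_t=e^{-2u}(\tilde K-\tilde\triangle u)$, and the curvature evolves by $\partial_t K_t=\triangle_{g_t}K_t+K_t(2K_t-r)$. First I would carry out the iteration scheme announced in the introduction, in a function space small enough to propagate $u(t),K_t\in C^{0,\alpha}(S,\beta)$ with bounded Dirichlet energy and $\triangle K_t$ bounded; this yields the local solution on $[0,T]$ together with cone-adapted a priori (Schauder-type) estimates, which are exactly what is needed to re-start the flow, hence to upgrade finite-time bounds to long-time existence. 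The constant is forced to be $r=4\pi\tilde\chi/V(g_0)$ by $\frac{d}{dt}V(g_t)=\int(r-R)\,dV_t=0$, and Gauss--Bonnet for $g_t$ is inherited from \cite{mine}.

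For part (1): since $K_0\in C^{0,\alpha}(S,\beta)$ on a compact surface, $K_0<0$ gives $-C_1\le K_0\le-\delta$, and Gauss--Bonnet forces $r<0$. Applying the maximum principle to $K_t$ and comparing $K_{\min}(t),K_{\max}(t)$ with the ODE $\dot\varphi=\varphi(2\varphi-r)$ (equilibria $0$ and $r/2<0$) gives $c_1\le K_t\le c_2<0$ uniformly in $t$; feeding this uniform curvature bound back into the local theory yields long-time existence. For convergence, set $v=K_t-r/2$, so $\partial_t v=\triangle v+v(2v+r)$ with $2v+r=2K_t\le 2c_2<0$; the maximum principle then gives $\|v(t)\|_\infty\le\|v(0)\|_\infty e^{2c_2 t}$, i.e.\ exponential decay of $K_t-r/2$. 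Hence $\partial_t u=-v$ decays exponentially, $u(t)$ converges in $C^{0,\alpha}(S,\beta)$ to some $u_\infty$ (the uniform regularity estimates control the limit), and $g_\infty=e^{2u_\infty}\tilde g$ solves $\tilde\triangle u_\infty=\tilde K-\frac r2 e^{2u_\infty}$, a metric of constant curvature $r/2<0$.

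For parts (2) and (3): the one estimate that survives for any sign of $\tilde\chi$ is the lower bound, $K_t\ge\min(K_{\min}(0),r/2)=:-C$, from comparison with the same ODE. For the upper bound on $K_t$ — the point that makes long-time existence in (2) genuinely delicate when $\tilde\chi>0$ — I would follow Hamilton/Chow: when $\tilde\chi\le 0$ ($r\le 0$) the barrier $\frac{d}{dt}K_{\max}\le K_{\max}(2K_{\max}-r)$, together with the lower bound, Gauss--Bonnet $\int(K_t-r/2)\,dV_t=0$, and the standard integral/entropy estimates, controls $K_{\max}$; when $\tilde\chi>0$ one needs the entropy monotonicity and the Li--Yau--Hamilton (Harnack) inequality for the surface flow. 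Either way, a uniform curvature bound plus the cone-adapted estimates give long-time existence, proving (2). For (3), with $\beta_i<0$ and $r\le 0$, I would then repeat the scheme of part (1): show $\|K_t-r/2\|_\infty\to 0$ (for $r<0$ via the $v$-equation once $K_{\min}(t)\to r$; for $r=0$ via Hamilton's $\chi=0$ argument), deduce uniform convergence of $u(t)$, and obtain the constant-curvature limit metric.

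The main obstacle throughout is the maximum principle and the associated gradient/Schauder estimates at the cone tips: every comparison above is legitimate only if a sub- or super-solution of the parabolic equation, and the auxiliary gradient quantities entering the refined estimates, cannot misbehave at a singular point. For $\beta_i<0$, i.e.\ cone angle $<2\pi$, the needed Hopf-type lemmas and barriers at the tip are available — this is the content of ``some arguments in \cite{Ham} work for $\beta_i<0$'' — and establishing them uniformly, in a form compatible with the time-dependent metrics $g_t$, is where the real work lies; it is also the reason parts (2) and (3) carry the hypothesis $\beta_i<0$. A secondary technical point is that the cone-adapted Schauder estimates used to pass from a uniform curvature bound to genuine long-time existence, and to extract the limit metric, must be uniform up to the singularities.
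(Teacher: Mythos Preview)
Your outline for local existence, Gauss--Bonnet, and part (1) matches the paper. The genuine gap is in part (2), and it propagates to (3).

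\medskip
\textbf{The missing ingredient: Hamilton's $H$.} For the upper bound on $K_t$ you invoke ``standard integral/entropy estimates'' when $r\le 0$ and ``entropy monotonicity and Harnack'' when $r>0$. The paper does neither. It solves the potential equation $\triangle_{g_t} f = R-r$, evolves $f$ by $\partial_t f=\triangle_{g_t} f+rf$, and sets $H=R-r+|\nabla f|^2$. Then $\partial_t H\le \triangle H+rH$, so the maximum principle gives $H\le Ce^{rt}$, hence an upper bound on $R$ for all $t$ regardless of the sign of $r$. Your barrier $\frac{d}{dt}K_{\max}\le K_{\max}(2K_{\max}-r)$ by itself gives nothing when $K_{\max}(0)>0$ (the ODE blows up), and adapting Chow's entropy/Harnack machinery to the conical setting is a separate, nontrivial project that you have not sketched.

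\medskip
\textbf{Why $\beta_i<0$ really matters.} You attribute the hypothesis to ``Hopf-type lemmas and barriers at the tip''. That is not the mechanism here. The paper's maximum principle (Lemma~\ref{lem:maximum}) is integral: multiply by the positive part and integrate by parts, which only requires the subsolution to be bounded with bounded Dirichlet energy. No pointwise barrier at the tip is used. The hypothesis $\beta_i<0$ enters solely through the Poisson regularity of Section~\ref{sec:poisson}: one needs $|\nabla f|^2$ bounded with bounded Dirichlet energy in order to feed $H$ into the maximum principle, and Lemma~\ref{lem:regularity2}/Corollary~\ref{cor:regular} (proved via a covering trick reducing to $\beta\in(-\tfrac12,0]$) deliver exactly this under $\beta<0$. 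Without the $H$-function your proposal never touches $|\nabla f|^2$, so you never see where $\beta_i<0$ is actually consumed.

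\medskip
\textbf{Part (3), $\chi=0$.} ``Hamilton's $\chi=0$ argument'' is too vague; the paper itself notes the proof here is somewhat different from known ones. Concretely: from $r=0$ and the evolution of $f$ one gets $|f|$ uniformly bounded, hence $g(t)=e^{f_0-f(\cdot,t)}g(0)$ gives a uniform bound on $u$; the quantity $t|\nabla f|^2+f^2$ is a subsolution, so $|\nabla f|^2\le C/(1+t)$; one then extracts subsequential limits, shows the limit is flat via the potential equation, and proves uniqueness of the limit by comparing two flat conformal factors with equal volume.
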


 The proof of this theorem depends on further understanding of both the analysis involved in the cone singularity and the nature of the Ricci flow equation (on surface). The central theme is that what regularity can be proved for the singular equation  and whether this regularity enables us to reproduce the methods used in the smooth setting. The idea behind our previous paper on this subject \cite{mine} can be summarized as follows: (1) approximate the cone manifold by manifold with boundary; (2) obtain uniform $C^0$ estimate in the approximation and leave higher order estimates to interior estimates away from the singularity; (3) show the finiteness of Dirichlet energy, which justifies the maximum principle.

 In this paper, several new observations and methods are introduced. For the convenience of the reader, we list them below.

 (1) The approximation scheme we used in \cite{mine} is robust if we take $t$-derivative of the evolution equation. Precisely, we approximate $u$ (the solution on conical surface) by $u_k$ (the solution on the manifold with boundary), while at the same time $\partial_t u$ is approximated by $\partial_t u_k$. This enables us to prove linear estimate for $\partial_t u$ and the Dirichlet energy of $\partial_t u$ (see Lemma \ref{lem:two} and Lemma \ref{lem:energytwo}) in Section \ref{sec:linear}. Using this double-layer linear estimate, we do iteration to obtain local solution to the normalized Ricci flow with bounded curvature in Section \ref{sec:iteration}.

 (2) In principle, we can do multi-layer linear estimate by keep taking $t$-derivatives. In that sense, we can obtain local solution with better and better regularity, at the expenses of assuming more and more of the initial data. A difficulty is that to obtain a local solution with bounded curvature $K$, we need to assume that $\triangle_0 K_0$ is bounded for the initial value (see Theorem \ref{thm:short}). Therefore, even if we use the evolution equation of $K$ to show that $K$ is bounded up to time $T$, Theorem \ref{thm:short} does not extend the solution beyond $T$. 

 This problem can be solved as follows. We note that the normalized Ricci flow equation is quasi-linear, the evolution equation for $\partial_t u$ (or equivalently, $\triangle u$, or $K$) is semi-linear and fortunately, the evolution equation for $\partial_t K$ (or equivalently $\triangle K$ if $u$ and $K$ are bounded) is linear (with $u$ and $K$ as coefficients). The point is that linear equations never blow up. Our proof of Lemma \ref{lem:long} follows this line. 

 (3) This last observation is not a necessary logical part in the proof. It is a point of view about the regularity of parabolic equations on conical surfaces. If $u$ is a solution to (\ref{eqn:nrf}), what is the regularity near the cone point? Is the gradient of $u$ bounded? the Hessian? A `correct' way of answering these questions is that $t$-derivatives of $u$ are bounded (as long as $u$ is) and one should consult the regularity of Poisson equation for the boundedness of $\nabla u$ and $\nabla^2 u$, which we know depends on the cone angle. We discuss the Poisson equation in Section \ref{sec:poisson}. We need it to justify the application of maximum principle to $\abs{\nabla f}^2$, where $f$ is the potential of curvature ($\triangle f= r-R$).

 Other proofs in this paper are more or less known techniques in the theory of Ricci flow on surfaces.

 It is natural to ask what happens for positive Euler number case. With the existence of conical singularities, there may not be a constant curvature metric. It is expected that when constant curvature metric exists, the flow converges to it and when no such metric exists, the flow approximates a Ricci soliton. We refer the readers to the recent preprints \cite{MRS} and \cite{Song} for the latest progress made in this direction.  

 \begin{rem}
	 After the completion of this paper, we found a gap in our previous paper \cite{mine}. In the proof of local existence, we used Schauder fixed point theorem. However, we did not check that the map constructed is a continuous map. Moreover, we do not know how to fix the problem in that context. The results of this paper do not depend on that theorem and provide a fix to the problem. It turns out that the stronger space of iteration is the key to this problem.
 \end{rem}

 \section{Linear parabolic equation and maximum principle}\label{sec:linear}

In this section, we study the following linear parabolic equation
\begin{equation}\label{eqn:one}
  \pfrac{u}{t}= a(x,t)\tilde{\triangle} u+ b(x,t)u +f(x,t)
\end{equation}
with initial value
\begin{equation*}
	u|_{t=0}=u_0
\end{equation*}
on $S$,
where $\tilde{\triangle}$ is the Laplacian of the fixed background metric $\tilde{g}$.

\subsection{Basic estimates}

Important to our discussion are some weighted H\"older spaces defined in \cite{mine}. The elliptic version is $C^{l,\alpha}(S,\beta)$ and the parabolic version is $C^{l,\alpha}( (S,\beta)\times [0,T])$. In this paper, for simplicity, we denote them by $\mathcal E^{l,\alpha}$ and $\mathcal P^{l,\alpha,T}$ respectively. 
{For reader's convenience, we recall the definition .

For simplicity, we assume there is only one singular point $p$ and $U$ is a neighborhood of $p$, in which $(x,y)$ is a local coordiante system compatible with the conformal structure of $S$ and $p=(0,0)$. By setting
\begin{equation*}
	x=r\cos \theta,\quad y=r\sin \theta
\end{equation*}
and
\begin{equation*}
	\rho=\frac{1}{\beta+1}r^{\beta+1},
\end{equation*}
we regard $f$ as a function of $(\rho,\theta)$. Assume that $(\rho,\theta)$ is defined for $\rho\leq 2$. If we write $B_2\setminus B_1$ for the annulus defined by $1\leq \rho\leq 2$,
then the $\mathcal E^{l,\alpha}$ norm is defined as
\begin{equation*}
	\norm{f}_{\mathcal E^{l,\alpha}}= \sup_{k=0 \ldots \infty} \norm{f(2^{-k} \rho,\theta)}_{C^{l,\alpha}(B_2\setminus B_1)} + \norm{f}_{C^{l,\alpha}(S\setminus U)}.
\end{equation*}
Here $C^{l,\alpha}(B_2\setminus B_1)$ and $C^{l,\alpha}(S\setminus U)$ are just usual H\"older norms. Similarly, if $f$ is a function defined on space time $S\times [0,T]$, we define (by regarding $f$ as a function of $(\rho,\theta,t)$)
\begin{equation*}
	\norm{f}_{\mathcal P^{l,\alpha}}= \sup_{k=0 \ldots \infty} \norm{f(2^{-k} \rho,\theta,4^{-k} t)}_{C^{l,\alpha}(B_2\setminus B_1\times [0,4^k T])} + \norm{f}_{C^{l,\alpha}(S\setminus U\times [0,T])}.
\end{equation*}
To see that the definition is independent of our choice of coordinate system $(x,y)$, we refer to Section 2 of \cite{mine}.
}

In spite of the tedious definition, it is not difficult to understand the meaning of these weighted H\"older space. Away from the singularity, they are just the normal H\"older space. Near a singularity, the $\mathcal E^{l,\alpha}$ norm is the bound for up to $l-$th derivatives which one may obtain for a bounded harmonic function via applying interior estimate on a ball away from the singularity. A similar characterization is true for $\mathcal P^{l,\alpha,T}$ if we replace the harmonic function by a solution to linear heat equation defined on $S\times [0,T]$. 

\begin{rem}
	Weighted H\"older space is nothing new in the study of degenerate elliptic operators. In fact, $\mathcal E^{l,\alpha}$ coincides with the edge H\"older space in \cite{big3} in the case of conical surfaces. See also Section 3.2 and Section 3.6.2 of \cite{MRS}.
\end{rem}

The following is almost Theorem 3.1 in \cite{mine}. The difference is that we have an extra $b(x,t)u$ term in our equation, which does not affect the proof so that the proof in \cite{mine} works here with almost no modification. {We refer to Section 5 of \cite{JL} and Section 3 of \cite{MRS} for similar Schauder type estimates.}
\begin{lem}\label{lem:one}
  If $a,b,f\in \mathcal P^{0,\alpha,T}$ and $u_0\in \mathcal E^{2,\alpha}$, then there exists a solution $u$ to equation (\ref{eqn:one}) with initial value $u_0$ such that
  \begin{equation*}
    \norm{u}_{\mathcal P^{2,\alpha,T}}\leq C
  \end{equation*}
  where $C$ depends on $T$ and the $\mathcal P^{0,\alpha,T}$ norm of $a,b,f$ and the $\mathcal E^{2,\alpha}$ norm of $u_0$.
\end{lem}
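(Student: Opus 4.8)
The plan is to treat Lemma~\ref{lem:one} as a perturbation of Theorem~3.1 of \cite{mine}, which handles exactly equation~(\ref{eqn:one}) without the zeroth-order term $b(x,t)u$. So the whole task is to absorb the extra term $b(x,t)u$. I would first recall the structure of the argument in \cite{mine}: one builds the solution by approximating the conical surface $S$ by manifolds with boundary $S_k$ (truncating small disks around each $p_i$), solving the corresponding boundary-value problem by standard parabolic theory, obtaining a uniform $\mathcal P^{2,\alpha}$ estimate independent of $k$ via the Schauder-type estimates in the weighted spaces, and then passing to the limit. The key point is that the weighted Schauder estimate for the model operator $\partial_t - a\tilde\triangle$ on the cone already produces, for any right-hand side $F\in\mathcal P^{0,\alpha,T}$, a solution with $\norm{u}_{\mathcal P^{2,\alpha,T}}\le C(\norm{F}_{\mathcal P^{0,\alpha,T}}+\norm{u_0}_{\mathcal E^{2,\alpha}})$.

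The first route — and the cleanest — is a fixed-point / Duhamel argument. Rewrite (\ref{eqn:one}) as $\partial_t u = a\tilde\triangle u + \big(bu+f\big)$ and view $bu+f$ as the inhomogeneous term. Define the map $\Phi: v\mapsto u$ where $u$ solves $\partial_t u = a\tilde\triangle u + bv + f$ with $u|_{t=0}=u_0$, using the existence-plus-estimate from \cite{mine}. Since $b\in\mathcal P^{0,\alpha,T}$ and $\mathcal P^{0,\alpha,T}$ is a Banach algebra (multiplication is continuous there, as it contains the ordinary Hölder norm away from the cone and a weighted $C^{0,\alpha}$ bound near it), we have $\norm{bv+f}_{\mathcal P^{0,\alpha,T}}\le \norm{b}_{\mathcal P^{0,\alpha,T}}\norm{v}_{\mathcal P^{0,\alpha,T}}+\norm{f}_{\mathcal P^{0,\alpha,T}}$, and $\norm{v}_{\mathcal P^{0,\alpha,T}}\le \norm{v}_{\mathcal P^{2,\alpha,T}}$. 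On a short time interval $[0,T_0]$ the constant $C=C(T_0)$ in the \cite{mine} estimate can be taken small enough (the inhomogeneous part of the estimate carries a factor that vanishes as $T_0\to 0$, while the $u_0$ part does not affect contractivity of differences) that $\Phi$ is a contraction on a ball of $\mathcal P^{2,\alpha,T_0}$; its fixed point is the desired solution. Then one extends from $[0,T_0]$ to $[0,T]$ by iterating the short-time construction finitely many times, using $u(\cdot,T_0)\in\mathcal E^{2,\alpha}$ as the new initial data — the $\mathcal E^{2,\alpha}$ regularity of the time-slice is part of the $\mathcal P^{2,\alpha,T}$ estimate — and tracking the accumulated constant.

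The main obstacle I anticipate is purely bookkeeping rather than conceptual: making sure the constant $C$ in the short-time estimate genuinely degenerates (goes to zero) as $T_0\to 0$ on the inhomogeneous term, so that the contraction actually closes, and then controlling how $C$ grows under the finitely many restarts needed to reach the full $[0,T]$. If one prefers to avoid even this, the alternative is to note, exactly as the paper itself does, that the $k$-approximation scheme of \cite{mine} is robust under adding $b(x,t)u$: the boundary-value problems on $S_k$ still have smooth solutions by classical parabolic theory, the weighted Schauder estimates are insensitive to lower-order terms with coefficients in $\mathcal P^{0,\alpha,T}$ (the $bu$ term only contributes to the right-hand side in the local estimates and is absorbed because $\norm{u}_{\mathcal P^{0,\alpha,T}}$ is controlled first by the maximum principle and then bootstrapped), and the limit $k\to\infty$ goes through verbatim. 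This is the ``no modification needed'' remark in the statement, and either presentation suffices; I would write the fixed-point version for a reader who wants a self-contained argument and otherwise simply cite Theorem~3.1 of \cite{mine}.
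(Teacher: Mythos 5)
Your proposal is correct, and your second route is exactly what the paper does: the proof of Lemma~\ref{lem:one} consists of observing that the extra zeroth-order term $b(x,t)u$ does not disturb the approximation scheme of Theorem~3.1 of \cite{mine} --- solve the Neumann problem (\ref{eqn:sk}) on the truncated surfaces $S_k$, get a uniform $C^0$ bound, pass to the limit, and recover the $\mathcal P^{2,\alpha,T}$ estimate from interior estimates. Your first route (fixed point with $bu+f$ as the inhomogeneity) is a legitimate alternative, and you correctly identify its one delicate point: the weighted Schauder constant on the inhomogeneous term does \emph{not} by itself shrink as $T_0\to 0$, so to close the contraction you must combine the small-time $C^0$ estimate (which does carry a factor of $T_0$, via the maximum principle) with an interpolation inequality trading the intermediate norm of a difference for a small multiple of its $\mathcal P^{2,\alpha,T_0}$ norm plus a large multiple of its $C^0$ norm --- this is precisely the device the paper uses later (Lemma~4.3 of \cite{mine}, invoked in step (5) of the proof of Theorem~\ref{thm:short}). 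The approximation-scheme route avoids this bookkeeping entirely, which is presumably why the paper prefers it; the fixed-point route buys a more self-contained and modular presentation at the cost of that extra interpolation step.
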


\begin{rem}
  In fact, $C$ depends on $\max(T,1)$ instead of $T$. It follows from the $C^0$ estimate in the proof. We shall need this fact later. 
\end{rem}

To illustrate the idea of the proof, assume that there is only one singularity. On a neighborhood $U$ of the singularity, the background metric $\tilde{g}$ is given by
 \begin{equation*}
   \tilde{g}=r^{2\beta}(dr^2+r^2 d\theta^2).
 \end{equation*}
Set
\begin{equation*}
  S_k=S\setminus \{(x,y)\in U |\rho(x,y)<2^{-k}\}
\end{equation*}
where $\rho=\frac{1}{\beta+1}r^{\beta+1}$.
We study the following boundary value problem
\begin{equation}\label{eqn:sk}
  \left\{
  \begin{array}[]{ll}
    \partial_t u =a(x,t)\tilde{\triangle} u +b(x,t)u + f(x,t) & \mbox{on } S_k \\
    \pfrac{u}{\nu} |_{\partial S_k}=0 & \\
    u(x,0)=u_0(x)& \mbox{ on } S_k
  \end{array}
  \right.
\end{equation}
Here $\nu$ is the normal vector to the boundary $\partial S_k$. 
It is proved in the Appendix (Theorem \ref{app:good}) that there is a solution $u_k$ to  (\ref{eqn:sk}). $u_k$ is $C^{2,\alpha}$ away from $\partial S_k\times \set{0}$.  
We can obtain (see Remark \ref{rem:maximum}) a uniform $C^0$ estimate for $u_k$ so that for each compact set $K\subset S\setminus \set{p_i}$ and $k$ sufficiently large, $C^{2,\alpha}(K\times [0,T])$ estimate follows from interior estimates. By sending $k$ to infinity, we get the solution $u$ in Lemma \ref{lem:one}. The estimate of $u$ follows from the interior estimates again.

The next lemma is an estimate for $\int_{S\setminus \set{p_i}} \abs{\tilde{\nabla} u}^2 d\tilde{V}$. It turns out that the boundedness of the Dirichlet energy is very important to us. For example, we need some maximum principle for parabolic equations. However, the presence of a conical singularity implies that ordinary maximum principle can't be true without extra assumptions. Indeed, being bounded in the Dirichlet energy is part of the assumption. For more detail, see the next subsection on maximum principle.

\begin{lem}
\label{lem:energy}
Let $u$ be the solution constructed in Lemma \ref{lem:one} and assume $u_0$, $b$ and $f(\cdot,t)$ have uniformly bounded Dirichlet energy. 
Then 
{\begin{eqnarray}\label{eqn:energy}
	&&\int_{S\setminus \set{p_i}} \abs{\tilde{\nabla} u}^2(t) d\tilde{V}\\ \nonumber
  &\leq& e^{C_1t} \int_{S\setminus \set{p_i}} \abs{\tilde{\nabla} u_0}^2 d\tilde{V} +\int_0^t e^{C_1(t-s)} \left( C_2 \int_{S\setminus \set{p_i}} \abs{\tilde{\nabla} f}^2 +\abs{\tilde{\nabla} b}^2 d\tilde{V}\right) ds. 
\end{eqnarray}
}
Here $C_1$ depends on the $C^0$ norm of $b$ and $C_2$ depends on the $C^0$ norm of $u$.
In particular, $u(\cdot,t)$ has bounded energy.
\end{lem}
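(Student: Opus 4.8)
The plan is to differentiate the Dirichlet energy in time and integrate by parts, using the linear equation (\ref{eqn:one}) to control the resulting terms. Since $u$ is obtained as a limit of the $u_k$ solving (\ref{eqn:sk}), I would first carry out the computation on each $S_k$, where everything is smooth and the Neumann boundary condition kills the boundary terms, and then pass to the limit using the uniform bounds already available from Lemma \ref{lem:one} together with the finiteness of the energies of $u_0$, $b$, and $f$.

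Concretely, on $S_k$ one computes
\begin{equation*}
  \frac{d}{dt}\int_{S_k}\abs{\tilde\nabla u_k}^2\,d\tilde V
  = 2\int_{S_k}\tilde\nabla u_k\cdot\tilde\nabla\big(\partial_t u_k\big)\,d\tilde V
  = -2\int_{S_k}\tilde\triangle u_k\,\big(a\,\tilde\triangle u_k + b\,u_k + f\big)\,d\tilde V,
\end{equation*}
where the Neumann condition $\partial_\nu u_k=0$ on $\partial S_k$ removes the boundary integral in the integration by parts. Since $a>0$ (this positivity is built into the setup in \cite{mine}, as $a$ will be $e^{-2u}$-type), the leading term $-2\int a (\tilde\triangle u_k)^2$ is nonpositive and should be kept as a good term; the cross terms are estimated by Cauchy--Schwarz, absorbing $\abs{\tilde\triangle u_k}$ into this good term: for instance $-2\int a\,\tilde\triangle u_k\,(b u_k + f) \le \int a(\tilde\triangle u_k)^2 + C\int (b^2 u_k^2 + f^2)$. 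One then needs to convert $\int b^2 u_k^2$ and $\int f^2$ back into gradient energies; here I would instead integrate by parts the other way, writing $-2\int \tilde\triangle u_k\,(bu_k+f) = 2\int \tilde\nabla u_k\cdot\tilde\nabla(bu_k+f)$, which produces $2\int b\abs{\tilde\nabla u_k}^2$, $2\int u_k\,\tilde\nabla u_k\cdot\tilde\nabla b$, and $2\int \tilde\nabla u_k\cdot\tilde\nabla f$. The first is bounded by $2\norm{b}_{C^0}\int\abs{\tilde\nabla u_k}^2$, giving the $C_1$ term; the second and third are handled by Young's inequality, $2\int u_k\,\tilde\nabla u_k\cdot\tilde\nabla b \le \int\abs{\tilde\nabla u_k}^2 + \norm{u_k}_{C^0}^2\int\abs{\tilde\nabla b}^2$ and similarly for $f$, yielding the $C_2\int(\abs{\tilde\nabla f}^2+\abs{\tilde\nabla b}^2)$ term with $C_2$ depending on $\norm{u}_{C^0}$. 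This gives (\ref{eqn:energy}) on $S_k$, and Gronwall's inequality then gives the uniform-in-$k$ energy bound, which passes to the limit.

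The main obstacle I anticipate is the passage to the limit $k\to\infty$: I must know that $\int_{S_k}\abs{\tilde\nabla u_k}^2\to\int_S\abs{\tilde\nabla u}^2$ and that the differential inequality survives the limit. The uniform $C^0$ bound on $u_k$ and the uniform Dirichlet energy bound (from Gronwall applied on each $S_k$, with constants independent of $k$) give weak $H^1$ convergence on compact subsets away from the singularity, and lower semicontinuity of the energy; combined with the hypothesis that $u_0$, $b$, $f(\cdot,t)$ have genuinely finite (not just $S_k$-truncated) energy, one controls the contribution near the cone point. This is precisely the kind of argument already used in \cite{mine} to establish finiteness of Dirichlet energy, so I would invoke that machinery. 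A secondary subtlety is that the constants $C_1$, $C_2$ should be made explicit in terms of $\norm{b}_{C^0}$ and $\norm{u}_{C^0}$ respectively, which the computation above already tracks; one should also note that $\norm{u}_{C^0}$ is controlled by Lemma \ref{lem:one}, so the final statement is self-contained.
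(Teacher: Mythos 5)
Your proposal is correct and follows essentially the same route as the paper: differentiate the energy on $S_k$, use the Neumann condition to integrate by parts, keep $-2\int_{S_k} a(\tilde\triangle u_k)^2$ as the good sign term, write the remaining terms as $\int_{S_k}\tilde\nabla u_k\cdot\tilde\nabla(bu_k)$ and $\int_{S_k}\tilde\nabla u_k\cdot\tilde\nabla f$, apply Young's inequality, and pass to the limit in $k$. The only cosmetic difference is your brief detour through $\int(b^2u_k^2+f^2)$ before settling on the gradient form, which is the version the paper uses directly.
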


\begin{proof}
	Recall that $u$ is the limit {of} $u_k$. Moreover,
\begin{eqnarray*}
  \frac{d}{dt} \int_{S_k} \abs{\tilde{\nabla} u_k}^2 d\tilde{V}&=& 2\int_{S_k} \tilde{\nabla} u_k \cdot \tilde{\nabla} \pfrac{u_k}{t} d\tilde{V} \\
  &= & -2\int_{S_k} a(x,t)(\tilde{\triangle} u_k)^2 d\tilde{V} + C \int_{S_k} \tilde{\nabla} u_k\cdot \tilde{\nabla} (b u_k) d\tilde{V} +2 \int_{S_k} \tilde{\nabla} u_k \cdot \tilde{\nabla} f(x,t) d\tilde{V} \\
  &\leq& C_1 \int_{S_k}\abs{\tilde{\nabla} u_k}^2 d\tilde{V} +C_2 \int_{S_k} \abs{\tilde{\nabla} f}^2 +\abs{\tilde{\nabla} b}^2 d\tilde{V}. 
\end{eqnarray*}
Here $C_1$ depends on the $C^0$ norm of $b$ and $C_2$ depends on the $C^0$ norm of $u_k$.

Integrating from $s=0$ to $s=t$, we get
\begin{eqnarray*}
  &&\int_{S_k} \abs{\tilde{\nabla} u_k}^2(t) d\tilde{V}\\ 
  &\leq& e^{C_1t} \int_{S_k} \abs{\tilde{\nabla} u_0}^2 d\tilde{V} +\int_0^t e^{C_1(t-s)} \left( C_2 \int_{S_k} \abs{\tilde{\nabla} f}^2 +\abs{\tilde{\nabla} b}^2 d\tilde{V}\right) ds. 
\end{eqnarray*}
\begin{rem}
	When integrating from $s=0$ to $s=t$ above, we implicitly assumed that
	\begin{equation*}
		\lim_{s\to 0} \int_{S_k} \abs{\tilde{\nabla} u_k}^2 d\tilde{V} = \int_{S_k} \abs{\tilde{\nabla} u_0}^2 d\tilde{V}.
	\end{equation*}
	This is proved in the Appendix(Theorem \ref{app:good}).
\end{rem}

The lemma is proved by taking $k\to \infty$ and noticing that
\begin{equation*}
	\int_{S\setminus \set{p_i}} \abs{\tilde{\nabla} u}^2 (t) d\tilde{V} \leq \liminf_{k\to \infty} \int_{S_k} \abs{\tilde{\nabla} u_k}^2 (t) d\tilde{V}
\end{equation*}
with equality for $t=0$.

\end{proof}

In \cite{mine}, we applied Lemma \ref{lem:one} and Lemma \ref{lem:energy} to a linear equation to show the local existence of the normalized Ricci flow. The linear equation was of the form
\begin{equation}\label{eqn:smallone}
  \pfrac{u}{t}=a(x,t)\tilde{\triangle} u + f(x,t). 
\end{equation}
We add an extra term $b(x,t)u$ in this paper because in addition to (\ref{eqn:smallone}), we would also like to study the evolution equation of $\pfrac{u}{t}$, which is of the form (\ref{eqn:one}). The estimate of $\pfrac{u}{t}$ will enable us to prove the local existence of the normalized Ricci flow in a smaller space, which guarantees, among other things, the boundedness of the Gauss curvature.
 
To illustrate the idea of getting estimate of $\pfrac{u}{t}$, we take $t$-derivative of (\ref{eqn:sk}) with $b=0$ to see
\begin{equation*}
  \left\{
  \begin{array}[]{ll}
    \partial_t (\partial_t u_k) =a(x,t)\tilde{\triangle} (\partial_t u_k) + \frac{\partial_t a}{a} (\partial_t u_k - f)  + \partial_t f(x,t) & \mbox{on } S_k \\
    \pfrac{(\partial_t u_k)}{\nu} |_{\partial S_k}=0 & \\
    (\partial_t u_k)(x,0)=a(x,0)\tilde{\triangle} u_0(x)+ f(x,0)& \mbox{on } S_k.
  \end{array}
  \right.
\end{equation*}

The important observation is that $\partial_t u_k$ still satisfies the Neumann boundary condition. Naively, one may want to argue as in Proposition 3.2 of \cite{mine} to derive estimates of $\partial_t u_k$ and pass it to $\partial_t u$, which satisfies
\begin{equation*}
  \left\{
  \begin{array}[]{ll}
    \partial_t (\partial_t u) =a(x,t)\tilde{\triangle} (\partial_t u) + \frac{\partial_t a}{a} (\partial_t u - f)  + \partial_t f(x,t) & \mbox{on } S \\
    (\partial_t u)(x,0)=a(x,0)\tilde{\triangle} u_0(x)+ f(x,0)& \mbox{on } S.
  \end{array}
  \right.
\end{equation*}

However, the compatibility condition for initial-boundary value problem of parabolic equation becomes serious here. Namely, if
\begin{equation*}
\partial_\nu  u_0 \ne 0
\end{equation*}
on $\partial S_k$, then $u_k$ can not be $C^1$ at the corner $\partial S_k \times \set{0}$. In fact, it is shown in the appendix that it is not (in general). Therefore, our methods of proving $C^0$ and $W^{1,2}$ bound fail for $\pfrac{u_k}{t}$. 

To solve this problem, we consider the equation
\begin{equation*}
  \left\{
  \begin{array}[]{ll}
    \partial_t w =a(x,t)\tilde{\triangle} w + \frac{\partial_t a}{a} (w - f)  + \partial_t f(x,t) & \mbox{on } S_k \\
    \pfrac{w}{\nu} |_{\partial S_k}=0 & \\
    w(x,0)=a(x,0)\tilde{\triangle} u_0(x)+ f(x,0)& \mbox{on } S_k.
  \end{array}
  \right.
\end{equation*}
This is exactly the equation satisfied by $\pfrac{u_k}{t}$. Theorem \ref{app:good} provides another solution denoted by $w_k$. $w_k$ is more regular at $t=0$ as given by (3) and (4) of Theorem \ref{app:good}. This allows us to prove linear estimates (including $C^0$, $\mathcal P^{2,\alpha,T}$ and energy) for $w$. The key observation is
\begin{equation*}
	\lim_{k\to \infty} w_k(x,t)= \lim_{k\to \infty} \pfrac{u_k}{t} (x,t)= \pfrac{u}{t}(x,t).
\end{equation*}
This follows from
  \begin{equation}\label{eqn:appendix}
	  \int_{S_k\times [0,T]} \abs{w_k-\pfrac{u_k}{t}} dVdt \leq C \int_{\partial S_k} \abs{\nabla u_0} dV_{\partial S_k}.
  \end{equation}
  We will give a detailed proof of this in the Appendix \ref{sub:26} and let's assume it now.

In summary, we have the following lemma,
\begin{lem}
  \label{lem:two}
  Let $u$ be a solution to (\ref{eqn:smallone}) given by Lemma \ref{lem:one}. In addition to the assumption in Lemma \ref{lem:one}, if $\tilde{\triangle} u_0, a(x,0)$ and $f(x,0)$ are in $\mathcal E^{2,\alpha}$, $\partial_t a$ and $\partial_t f$ are in $\mathcal P^{0,\alpha,T}$ and 
  if we assume further that $u_0$ has finite Dirichlet energy, then we have
  \begin{equation*}
    \norm{\partial_t u}_{\mathcal P^{2,\alpha,T}}\leq C
  \end{equation*}
  where $C$ depends on $T$, $c$ and the all the norms mentioned above.
\end{lem}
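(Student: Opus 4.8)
The plan is to differentiate equation (\ref{eqn:smallone}) in $t$ and recognize that $v:=\partial_t u$ solves an equation of the form (\ref{eqn:one}), to which Lemma \ref{lem:one} applies. Solving $\tilde\triangle u = a^{-1}(v-f)$ from (\ref{eqn:smallone}) and differentiating, one obtains
\begin{equation*}
  \partial_t v = a(x,t)\tilde{\triangle} v + \frac{\partial_t a}{a}\,(v-f) + \partial_t f,
\end{equation*}
which is (\ref{eqn:one}) with the same leading coefficient $a$, zeroth-order coefficient $b:=\partial_t a/a$, and inhomogeneous term $\tilde f := \partial_t f - (\partial_t a/a)f$; the initial value is $v(\cdot,0)=a(\cdot,0)\tilde\triangle u_0 + f(\cdot,0)$, exactly the trace at $t=0$ of the right-hand side of (\ref{eqn:smallone}).

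The first task is to verify that the data of this new equation meet the hypotheses of Lemma \ref{lem:one}. Since $a\in\mathcal P^{0,\alpha,T}$ and $a\ge c>0$, the reciprocal $1/a$ lies in $\mathcal P^{0,\alpha,T}$ (these weighted parabolic H\"older spaces form an algebra and are stable under taking reciprocals of functions bounded away from zero); together with $\partial_t a, f\in\mathcal P^{0,\alpha,T}$ this gives $b,\tilde f\in\mathcal P^{0,\alpha,T}$. Similarly $\mathcal E^{2,\alpha}$ is an algebra, so from $\tilde\triangle u_0,\ a(\cdot,0),\ f(\cdot,0)\in\mathcal E^{2,\alpha}$ we get $v(\cdot,0)\in\mathcal E^{2,\alpha}$. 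Hence Lemma \ref{lem:one} yields a solution $w$ of this equation with $\norm{w}_{\mathcal P^{2,\alpha,T}}\le C$, where $C$ depends only on $T$, $c$, and the listed norms.

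The one genuine point requiring care is the identification of $w$ with $\partial_t u$, i.e. interchanging $\partial_t$ with the $k\to\infty$ limit in the construction of $u$. I would repeat the approximation underlying Lemma \ref{lem:one}: $u=\lim_k u_k$ with $u_k$ solving the Neumann problem (\ref{eqn:sk}) (with $b=0$) on $S_k$. Differentiating (\ref{eqn:sk}) in $t$, the function $\partial_t u_k$ solves the differentiated equation on $S_k$, and — crucially — since $\partial_\nu u_k|_{\partial S_k}=0$ for every $t$, it again satisfies the homogeneous Neumann condition $\partial_\nu(\partial_t u_k)|_{\partial S_k}=0$. Thus the uniform $C^0$ estimate of Proposition 3.2 of \cite{mine} applies to $\partial_t u_k$ uniformly in $k$, and the interior Schauder estimates away from the cone tips upgrade this to uniform $\mathcal P^{2,\alpha,T}$-type bounds on compact subsets of $(S\setminus\{p_i\})\times[0,T]$. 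Passing to a subsequence, $\partial_t u_k\to\partial_t u$, which solves the limiting equation with the global bound $\norm{\partial_t u}_{\mathcal P^{2,\alpha,T}}\le C$; by uniqueness for (\ref{eqn:one}) this limit is the $w$ above.

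I expect the main obstacle to be precisely this last step: checking that $u_k$ is regular enough in $t$ to differentiate (\ref{eqn:sk}) — which uses the assumed $\mathcal E^{2,\alpha}$-regularity of the "second layer" of initial data together with standard parabolic theory on the manifold-with-boundary $S_k$, where the operator is uniformly parabolic — and confirming that the uniform-in-$k$ estimates genuinely survive the limit near the singularity. Once the Neumann condition is observed to be preserved under $\partial_t$, however, the argument is a faithful rerun of the proof of Lemma \ref{lem:one}, so no analytic input beyond \cite{mine} is needed.
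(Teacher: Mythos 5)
Your proposal is correct and follows essentially the same route as the paper: differentiate the approximating Neumann problems (\ref{eqn:sk}) in $t$, observe that $\partial_t u_k$ still satisfies the homogeneous Neumann condition so the uniform maximum principle and the estimates of Proposition 3.2 of \cite{mine} apply, and pass to the limit to identify the result with $\partial_t u$ solving $\partial_t(\partial_t u)=a\tilde{\triangle}(\partial_t u)+\frac{\partial_t a}{a}(\partial_t u-f)+\partial_t f$ with initial value $a(\cdot,0)\tilde{\triangle}u_0+f(\cdot,0)$. The key observation you single out (preservation of the Neumann condition under $\partial_t$) is exactly the one the paper emphasizes.
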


\begin{proof}
	It follows from $\int_{S\setminus \set{p_i}} \abs{\nabla u_0}^2 d\tilde{V} < \infty$ and an argment in Lemma \ref{lem:basic} that the right hand side of (\ref{eqn:appendix}) goes to zero (if we choose $S_k$ properly). Then we apply the argument of Lemma \ref{lem:one} to obtain uniform estimates for $w_k$. The proof is done since $\lim_{k\to \infty} w_k= \partial_t u$.
\end{proof}

There is also a version of Lemma \ref{lem:energy} for $\pfrac{u}{t}$.
\begin{lem}
	\label{lem:energytwo}
	In addition to all the assumptions of Lemma \ref{lem:two}, if $\tilde{\triangle} u_0$, $a(x,0)$, $f(x,0)$, $\partial_t a$ and $\partial_t f$ have uniformly bounded Dirichlet energy, then
	{
	\begin{eqnarray*}
	  &&\int_{S\setminus \set{p_i}} \abs{\tilde{\nabla} \partial_t u}^2(t) d\tilde{V}\\
	  &\leq& e^{C_1 t} \int_{S\setminus \set{p_i}} \abs{\tilde{\nabla} (a(x,0)\tilde{\triangle}u_0+f(x,0))}^2 d\tilde{V} \\
	  && +\int_0^t e^{C_1(t-s)}\left( C_2\int_{S\setminus \set{p_i}} \abs{\tilde{\nabla} a}^2 +\abs{\tilde{\nabla} \partial_t a}^2 + \abs{\tilde{\nabla} f}^2 +\abs{\tilde{\nabla} \partial_t f}^2 d\tilde{V}\right) ds.
	\end{eqnarray*}
}
	Here $C_1$ depends on $C^0$ norm of $\partial_t a$ and $c$ in Lemma \ref{lem:two} and $C_2$ depends on the $C^0$ norm of $\partial_t u$, $c$, $\partial_t a$ and $f$. In particular, $\partial_t u$ has bounded energy.
\end{lem}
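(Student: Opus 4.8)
The plan is to run the argument of Lemma \ref{lem:energy} one level up, i.e.\ applied to $v_k:=\partial_t u_k$ in place of $u_k$, and then pass to the limit $k\to\infty$ exactly as before. Recall from the computation preceding Lemma \ref{lem:two} that, with $b=0$ in (\ref{eqn:sk}), the function $v_k=\partial_t u_k$ solves on $S_k$
\[
  \partial_t v_k = a(x,t)\tilde{\triangle} v_k + \frac{\partial_t a}{a}\,(v_k-f) + \partial_t f,
\]
again with the \emph{homogeneous Neumann} boundary condition $\pfrac{v_k}{\nu}|_{\partial S_k}=0$ and with $v_k(x,0)=a(x,0)\tilde{\triangle}u_0(x)+f(x,0)$. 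The preservation of the Neumann condition under the $t$-derivative is the whole point: it is exactly what makes the boundary terms in the integration by parts on $S_k$ vanish, so the estimate is uniform in $k$.

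First I would differentiate $\int_{S_k}\abs{\tilde{\nabla}v_k}^2\,d\tilde{V}$ in $t$, insert the equation and integrate by parts. The principal term contributes $-2\int_{S_k}a(\tilde{\triangle}v_k)^2\,d\tilde{V}\leq 0$ (using $a>0$ and $\partial_\nu v_k=0$ on $\partial S_k$) and is simply discarded, exactly as the term $-2\int_{S_k}a(\tilde{\triangle}u_k)^2$ is discarded in Lemma \ref{lem:energy}. What remains is
\[
  2\int_{S_k}\tilde{\nabla}v_k\cdot\tilde{\nabla}\Big[\frac{\partial_t a}{a}(v_k-f)\Big]\,d\tilde{V}
  + 2\int_{S_k}\tilde{\nabla}v_k\cdot\tilde{\nabla}(\partial_t f)\,d\tilde{V}.
\]
Expanding the first integral by the product rule produces, besides $\tilde{\nabla}v_k$ and $\tilde{\nabla}f$, the factor $\tilde{\nabla}(\partial_t a/a)=a^{-1}\tilde{\nabla}\partial_t a-a^{-2}(\partial_t a)\tilde{\nabla}a$; using the lower bound $a>c>0$ and the $C^0$ bounds on $\partial_t a$, on $\partial_t u$ (hence, uniformly in $k$, on $v_k$ by Lemma \ref{lem:two}) and on $f$, Cauchy--Schwarz bounds the whole right-hand side by
\[
  C_1\int_{S_k}\abs{\tilde{\nabla}v_k}^2\,d\tilde{V}
  + C_2\int_{S_k}\big(\abs{\tilde{\nabla}a}^2+\abs{\tilde{\nabla}\partial_t a}^2+\abs{\tilde{\nabla}f}^2+\abs{\tilde{\nabla}\partial_t f}^2\big)\,d\tilde{V},
\]
with $C_1,C_2$ of the dependence claimed. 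Equivalently, one may quote Lemma \ref{lem:energy} verbatim for this equation, with the zeroth-order coefficient $b$ replaced by $\partial_t a/a$ and the inhomogeneity $f$ replaced by $\partial_t f-(\partial_t a/a)f$, after noting these have uniformly bounded Dirichlet energy by the hypotheses.

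Next I would let $k\to\infty$. By Lemma \ref{lem:two}, $\partial_t u_k\to\partial_t u$ together with $\tilde{\nabla}\partial_t u_k\to\tilde{\nabla}\partial_t u$ in the sense needed (uniform $\mathcal P^{2,\alpha,T}$ bounds plus interior estimates on the exhaustion $S_k\uparrow S$), so the differential inequality descends to $\int_S\abs{\tilde{\nabla}\partial_t u}^2\,d\tilde{V}$, just as in the last line of the proof of Lemma \ref{lem:energy}. Finally, the hypotheses that $\tilde{\triangle}u_0$, $a(\cdot,0)$ and $f(\cdot,0)$ have bounded Dirichlet energy, together with their $C^0$ bounds, give that $\partial_t u(\cdot,0)=a(\cdot,0)\tilde{\triangle}u_0+f(\cdot,0)$ has finite Dirichlet energy; Gronwall's inequality then turns the displayed estimate into a uniform bound for $\int_S\abs{\tilde{\nabla}\partial_t u}^2\,d\tilde{V}$ on $[0,T]$, proving also that $\partial_t u$ has bounded energy. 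The genuinely new bookkeeping, compared with Lemma \ref{lem:energy}, is only that after differentiating in $t$ the new coefficient $\partial_t a/a$ and new inhomogeneity $\partial_t f-(\partial_t a/a)f$ carry $\tilde{\nabla}a$ and $\tilde{\nabla}\partial_t a$, which is why these appear on the right. I expect the only delicate point — the one worth writing out carefully — to be the passage of the energy inequality through $S_k\uparrow S$ without losing a term near the cone tip; but since the Neumann condition survives the $t$-derivative, this is handled exactly as in Lemma \ref{lem:energy}, and is precisely the robustness of the approximation scheme noted in the introduction.
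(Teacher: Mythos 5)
Your proposal is correct and follows exactly the route the paper intends (the paper omits the proof precisely because it is Lemma \ref{lem:energy} applied to the equation $\partial_t v_k = a\tilde{\triangle}v_k + \frac{\partial_t a}{a}(v_k-f)+\partial_t f$ for $v_k=\partial_t u_k$, with the Neumann condition preserved under the $t$-derivative so the estimate is uniform in $k$). Your bookkeeping of where $\tilde{\nabla}a$ and $\tilde{\nabla}\partial_t a$ enter, and of the dependence of $C_1$, $C_2$, matches the statement.
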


{\begin{rem}
	As suggested by the referee, we make some comparison between the linear estimates obtained here with Proposition 3.5, 3.6 and 3.7 in \cite{MRS}. To be the best of our knowledge, the parabolic estiamtes proved in this section is more or less the same as the Proposition 3.6 in \cite{MRS}. Proposition 3.7 therein is strictly stronger, because it contains nondegenerate estimate in time time at the cone point. For the same reason, the estimates here are also weaker than those obtained in \cite{new}. However, we find that this not necessary for our proof. We find a work-around as introduced in Section \ref{sec:intro}. The details are in Section \ref{sec:nonlinear}.
\end{rem}
}

\subsection{Maximum principle}
In this section we prove a version of the maximum principle for metrics with conical singularities. It should be noted that a maximum principle was used in the proof of Lemma \ref{lem:one}, see Proposition 3.2 and Corollary 3.5 in \cite{mine}. That one applies only to functions constructed from a specific process. The point here is that we use some regularity assumption instead of knowing where the functions come from.

\begin{rem}
  The assumption being used here, i.e. being bounded and having finite Dirichlet energy, is absolutely not necessary for the validity of integration by parts and maximum principle, as can be seen from the proof below. One can take different approaches to deal with the parabolic equations on a conical surfaces. It turns out that our approach favors the assumption. 

  One may want to compare with Jeffres' idea in \cite{Jeff}, {its elliptic version in Section 5 of \cite{big3}} and its parabolic version, Lemma 11.4 in \cite{new}.
\end{rem}

The following basic lemma shows that this assumption justifies the integration by parts, which is important to the proof of the maximum principle of this subsection.

\begin{lem}
	\label{lem:basic}
	Assume that $u$ and $v$ are bounded functions and have bounded Dirichlet energy and $u$ is in $\mathcal E^{2,\alpha}$, then
	\begin{equation*}
		\int_{S\setminus \set{p_i}} {\triangle} u=0 \quad \mbox{and} \quad \int_{S\setminus \set{p_i}} v\cdot{\triangle u} =-\int_{S\setminus \set{p_i}} \nabla u\cdot \nabla v 
	\end{equation*}
\end{lem}
\begin{proof}
	Note that these two equalities are invariant under conformal change of the metric, hence we may assume the metric is $\tilde{g}$. Since the first inequality follows from the second one by letting $v\equiv 1$, we prove the second equality only.

  For simplicity, assume that there is only one singularity. Near the singularity,
  \begin{equation*}
    \int_0^\delta \int_{\partial B(r)} \abs{\tilde{\nabla} u}^2 d\sigma dr <\infty.
  \end{equation*}
  Here $B(r)$ is the ball of radius $r$ centered at the singularity measured with respect to the cone metric $\tilde{g}$.
  For any $\varepsilon>0$, we claim that there is a sequence $r_i$ such that
  \begin{equation*}
    \int_{\partial B(r_i)} \abs{\tilde{\nabla} u}^2 d\sigma \leq \frac{\varepsilon}{r_i}.
  \end{equation*}
  If the claim is not true, then there exists some $\varepsilon>0$ such that for any $r$,
  \begin{equation*}
	  \int_{\partial B(r)} \abs{\tilde{\nabla} u}^2 d\sigma>\frac{\varepsilon}{r},
  \end{equation*}
  which is a contradiction to the finiteness of the Dirichlet energy.
It suffices to show that
  \begin{equation*}
    \lim_{i\to \infty} \int_{\partial B(r_i)} \abs{\tilde{\nabla} u}d\sigma =0,
  \end{equation*}
  because the boundary term in the integration by parts is $\frac{\partial u}{\partial \nu}v$ and $v$ is bounded. The H\"older inequality implies
  \begin{equation*}
    \int_{\partial B(r_i)} \abs{\tilde{\nabla} u}d\sigma \leq C \left( \int_{\partial B(r_i)} \abs{\tilde{\nabla} u}^2 d\sigma\right)^{1/2} r_i^{1/2}<\varepsilon .
  \end{equation*}
  Since $\varepsilon$ is arbitrary, we are done.
\end{proof}

The following are two maximum principles. The proofs are almost the same, but they are prepared for different purposes.

\begin{lem}
	\label{lem:maximum} Let $u(x,t)$ be a bounded function in $\mathcal P^{2,\alpha,T}$ with bounded Dirichlet energy satisfying
  \begin{equation*}
	  \partial_t u\leq a(x,t) \tilde{\triangle} u(x,t) + F(u),
  \end{equation*}
  where $F$ is a smooth function.
  We assume that $\partial_t a$, $a$ and $a^{-1}$ are bounded. If $C_0=\max_{S\setminus \set{p_i}} {u(\cdot,0)}$ and $h(t)$ is the solution of ODE,
  \begin{equation*}
    \frac{dh}{dt}=F(h)\quad \mbox{and }\quad h(0)=C_0,
  \end{equation*}
  then
  \begin{equation*}
    \max_{S\setminus \set{p_i}} {u(\cdot,t)}\leq h(t).
  \end{equation*}
\end{lem}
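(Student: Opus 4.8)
The plan is to prove the maximum principle by the standard barrier/comparison method, adapted to the conical setting using Lemma~\ref{lem:basic} to control the behavior near the cone tips. Fix a small time interval $[0,T]$ on which we work, and for $\eps>0$ set $h_\eps(t)$ to be the solution of $\frac{dh_\eps}{dt}=F(h_\eps)+\eps$ with $h_\eps(0)=C_0+\eps$; since $F$ is smooth, $h_\eps\to h$ uniformly on $[0,T]$ as $\eps\to0$, so it suffices to prove $u(\cdot,t)\le h_\eps(t)$ for each $\eps>0$. Consider $v=u-h_\eps$. At $t=0$ we have $v\le -\eps<0$, and $v$ satisfies a differential inequality of the form $\partial_t v\le a(x,t)\tilde\triangle v+F(u)-F(h_\eps)-\eps$. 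Using that $F$ is smooth (hence locally Lipschitz on the bounded range of $u$ and $h_\eps$), we may write $F(u)-F(h_\eps)=c(x,t)v$ for a bounded function $c$, so $\partial_t v\le a\tilde\triangle v+c\,v-\eps$.

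The core step is then to show that a bounded function $v$ with bounded Dirichlet energy satisfying $\partial_t v\le a\tilde\triangle v+c\,v-\eps$ with $v(\cdot,0)<0$ stays negative. I would first consider $w=e^{-\lambda t}v$ with $\lambda$ larger than $\sup|c|$, which satisfies $\partial_t w\le a\tilde\triangle w+(c-\lambda)w-\eps e^{-\lambda t}$, so that the zeroth-order coefficient $c-\lambda$ is negative. Now argue by contradiction: let $t_0$ be the first time $\max_S w(\cdot,t)$ reaches $0$ (using that $w$ is bounded and continuous up to the singularity, so the sup is a genuine max or an approached value). The clean way to package this while handling the cone point is the integral/energy version: multiply the inequality for $w$ by $w_+=\max(w,0)$ and integrate over $S$. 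By Lemma~\ref{lem:basic}, the integration by parts is justified for bounded functions with bounded Dirichlet energy (applied to $w$, which inherits these properties from $v$ since $h_\eps$ is spatially constant), giving $\int_S w_+ a\tilde\triangle w = -\int_S a|\tilde\nabla w_+|^2-\int_S w_+\tilde\nabla a\cdot\tilde\nabla w_+ \le$ (after absorbing the gradient-of-$a$ term using $a\ge c^{-1}$) a nonpositive main term plus a term bounded by $C\int_S w_+^2$. Combined with the negativity of the zeroth-order coefficient, one obtains $\frac{d}{dt}\int_S w_+^2\le C\int_S w_+^2$, and since $\int_S w_+^2=0$ at $t=0$, Gr\"onwall forces $w_+\equiv0$ for all $t\in[0,T]$.

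The main obstacle I expect is the delicate point that $v$ need not be smooth enough at the cone tip for the pointwise maximum principle argument, which is precisely why the integral formulation together with Lemma~\ref{lem:basic} is the right tool: one must check that $w_+$ still has bounded Dirichlet energy (true, since $|\tilde\nabla w_+|\le|\tilde\nabla w|$ a.e.) and that the differentiation under the integral sign $\frac{d}{dt}\int_S w_+^2=2\int_S w_+\partial_t w$ is legitimate, which follows from the regularity of $u$ (it lies in a weighted parabolic H\"older space, so $\partial_t u$ and $\tilde\triangle u$ are controlled away from the tip and the tip contributes negligibly by the finite-energy hypothesis). A secondary technical point is that $a$ is only assumed bounded with $a^{-1}$ and $\partial_t a$ bounded, not that $\tilde\nabla a$ is controlled; if needed I would first reduce to the case where $a$ is spatially regular by the same approximation by $S_k$ used in Lemma~\ref{lem:one}, or observe that in the application $a$ will in fact be a smooth positive function so the gradient term is harmless. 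Once $w_+\equiv0$ on $[0,T]$ we unwind the substitutions: $v\le0$, hence $u(\cdot,t)\le h_\eps(t)$, and letting $\eps\to0$ gives $u(\cdot,t)\le h(t)$, completing the proof.
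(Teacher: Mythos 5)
Your overall strategy---a Gr\"onwall argument on the integral of the square of the positive part of $u-h$, with the integration by parts at the cone tip justified by Lemma~\ref{lem:basic}---is the same as the paper's. The $\eps$-perturbation of the ODE and the $e^{-\lambda t}$ rescaling are harmless extras. But the point you dismiss as ``secondary,'' the gradient-of-$a$ term, is a genuine gap. When you multiply the inequality by $w_+$ and integrate, the term $\int_S w_+\,a\,\tilde\triangle w\,d\tilde V$ produces $-\int_S a\abs{\tilde\nabla w_+}^2 - \int_S w_+\,\tilde\nabla a\cdot\tilde\nabla w_+$, and the second integral cannot be absorbed: by Cauchy--Schwarz it costs you $\int_S w_+^2\abs{\tilde\nabla a}^2$, and the hypotheses give no control whatsoever on $\tilde\nabla a$ (only $a$, $a^{-1}$, $\partial_t a$ are assumed bounded). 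Neither of your proposed workarounds holds up. In the intended application $a=e^{-2u}$ with $u$ merely in the weighted H\"older space $\mathcal E^{2,\alpha}$ (or $\mathcal P^{2,\alpha,T}$); for such functions the interior-estimate characterization of the norm only gives $\abs{\tilde\nabla a}\lesssim d^{-1}$ near the tip, so $\tilde\nabla a$ is in general unbounded there. And the $S_k$ construction is a truncation of the \emph{domain}, not a mollification of the coefficient; moreover this lemma is deliberately stated for arbitrary bounded finite-energy $u$, not for functions arising from that construction, so you cannot appeal to it.

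The paper's fix is a small but essential twist: instead of $\int_S w_+^2$, differentiate the weighted quantity $\int_S \left[(u-h)^+\right]^2 a^{-1}\,d\tilde V$. Multiplying the differential inequality by $2(u-h)^+a^{-1}$ cancels the coefficient $a$ in front of $\tilde\triangle$, so the integration by parts yields exactly $-2\int_S\abs{\tilde\nabla (u-h)^+}^2\,d\tilde V$ with no $\tilde\nabla a$ term at all; the only new contribution is $\int_S\left[(u-h)^+\right]^2\partial_t(a^{-1})\,d\tilde V$, which is controlled precisely by the assumed bounds on $\partial_t a$ and $a^{-1}$. This is why the hypotheses of the lemma are ``$\partial_t a$, $a$, $a^{-1}$ bounded'' and nothing about spatial regularity of $a$. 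With that modification your argument goes through; as written, it does not close.
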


\begin{proof}
  For any function $w$, we will denote by $w^+$ the positive part of $w$ given by $\max\{w,0\}$. 
  \begin{eqnarray*}
	  &&\frac{d}{dt}\int_{S\setminus \set{p_i}} [(u(x,t)-h(t))^+]^2 a^{-1} d\tilde{V}\\
	  &=& \int_{S\setminus \set{p_i}} 2 (u-h)^+ \frac{d}{dt}(u-h) a^{-1} d\tilde{V} \\
	  && +\int_{S\setminus \set{p_i}} [ (u-h)^+]^2 \partial_t(a^{-1}) d\tilde{V} \\
	  &\leq & \int_{S\setminus \set{p_i}} 2 (u-h)^+ [\tilde{\triangle} (u-h)+ a^{-1}( F(u)- F(h))] d\tilde{V} \\
	  && +C \int_{S\setminus \set{p_i}} [ (u-h)^+]^2 a^{-1}  d\tilde{V} \\
	  &=& -2\int_{S\setminus \set{p_i}} \abs{\tilde{\nabla} (u-h)^+}^2 d\tilde{V} + \int_{S\setminus \set{p_i}} 2(u-h)^+ a^{-1}F'(\xi)(u-h) d\tilde{V} \\
	  && + C\int_{S\setminus \set{p_i}} [(u-h)^+]^2 a^{-1} d\tilde{V}\\
	  &\leq& C \int_{S\setminus \set{p_i}} [(u-h)^+]^2a^{-1} d\tilde{V}.
  \end{eqnarray*}
  The integration by parts is justified by applying Lemma \ref{lem:basic} with $u-h$ and $(u-h)^+$ in the place of $u$ and $v$ there.
  When $t=0$, we know $\int_{S\setminus \set{p_i}} [(u-h)^+]^2 d\tilde{V}$ is zero. Hence, it is zero forever by ODE comparison. 
\end{proof}

\begin{lem}\label{lem:c0}
	Let $u$ be a solution to (\ref{eqn:one}) which is bounded and has bounded Dirichlet energy. Assume that $\partial_t a$, $a$ and $a^{-1}$ are bounded. If $\tilde{\triangle} u_0$ is bounded, then we have
  \begin{equation*}
    \norm{u(\cdot,t)-u_0}_{C^0}\leq e^{C_1 t}\int_0^t e^{-C_1 s} C_2 ds, 
  \end{equation*}
  where $C_1$ is the $C^0$ norm of $b$ and $C_2$ is the $C_0$ norm of $f+ bu_0+a\tilde{\triangle} u_0$.
\end{lem}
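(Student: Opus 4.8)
The plan is to set $v(x,t) = u(x,t) - u_0(x)$ and track the evolution of $\norm{v(\cdot,t)}_{C^0}$ by a maximum-principle / energy argument of exactly the type used in Lemma \ref{lem:maximum}. Since $u$ solves (\ref{eqn:one}) and $u_0$ is time-independent, $v$ satisfies
\begin{equation*}
  \partial_t v = a(x,t)\tilde{\triangle} v + b(x,t) v + \bigl( a(x,t)\tilde{\triangle} u_0 + b(x,t) u_0 + f(x,t)\bigr),
\end{equation*}
with $v(x,0) = 0$. The inhomogeneous term on the right is, by hypothesis, bounded in $C^0$ by $C_2$ (we use here that $\tilde{\triangle} u_0$ is bounded so that $a\tilde{\triangle} u_0$ makes sense pointwise and is bounded, using the boundedness of $a$), and the coefficient of $v$ is bounded by $C_1 = \norm{b}_{C^0}$. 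So this is the affine scalar ODE comparison $\phi' = C_1 \phi + C_2$, $\phi(0)=0$, whose solution is $\phi(t) = e^{C_1 t}\int_0^t e^{-C_1 s} C_2\, ds$; the goal is to show $\abs{v} \le \phi$.

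The key steps, in order: (i) verify that $v$ is bounded (immediate: both $u$ and $u_0$ are bounded) and has bounded Dirichlet energy, so that Lemma \ref{lem:basic} applies to $v$ — for the energy of $v$ one combines $\norm{\tilde\nabla u(\cdot,t)}_{L^2} < \infty$ from Lemma \ref{lem:energy} with $\norm{\tilde\nabla u_0}_{L^2} < \infty$, which is part of the standing hypothesis; (ii) run the weighted $L^2$ computation on $\int_S [(v-\phi)^+]^2 a^{-1} d\tilde V$ exactly as in the proof of Lemma \ref{lem:maximum}: differentiate in $t$, use $\partial_t(a^{-1})$ bounded to absorb one term, integrate by parts the $a\tilde\triangle v$ term (justified by Lemma \ref{lem:basic} applied to the bounded, finite-energy function $(v-\phi)^+$) to produce $-2\int \abs{\tilde\nabla (v-\phi)^+}^2 \le 0$, and bound the remaining zeroth-order contributions — the $bv$ term, the inhomogeneity versus $C_2$, and the $-\phi'$ term — so that on the set $\{v > \phi\}$ the integrand of the "source" is $\le 0$, leaving $\frac{d}{dt}\int_S [(v-\phi)^+]^2 a^{-1} d\tilde V \le C \int_S [(v-\phi)^+]^2 a^{-1} d\tilde V$; (iii) since this quantity vanishes at $t=0$ and satisfies a Gronwall inequality with zero initial data, it stays zero, giving $v \le \phi$; (iv) repeat with $-v$ (equivalently apply the same argument to the lower barrier $-\phi$) to get $v \ge -\phi$, hence $\norm{v(\cdot,t)}_{C^0} \le \phi(t)$, which is the claim.

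The main obstacle is step (ii): one must be careful that after subtracting the barrier $\phi$ the surviving zeroth-order terms really do have a sign on $\{v>\phi\}$. Concretely, the "source" contribution is $2\int_{\{v>\phi\}} (v-\phi) a^{-1}\bigl(a\tilde\triangle u_0 + bu_0 + f + bv - \phi'\bigr) d\tilde V$; writing $bv = b(v-\phi) + b\phi$ splits off a term $2\int b(v-\phi)^2 a^{-1}$ that is harmless (absorbed into the Gronwall constant, and this is why $C_1 = \norm{b}_{C^0}$ appears), and leaves $2\int (v-\phi) a^{-1}\bigl((a\tilde\triangle u_0 + bu_0 + f) + b\phi - \phi'\bigr)$; by the choice of $\phi$ and the definitions of $C_1, C_2$ one has $\phi' - b\phi \ge C_2 \ge \abs{a\tilde\triangle u_0 + bu_0 + f}$ pointwise, hence this term is $\le 0$. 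Everything else is a routine repetition of Lemma \ref{lem:maximum}. The only genuinely new point to check relative to that lemma is that the comparison function $\phi$ is merely a function of $t$ (a legitimate comparison barrier), and that the required boundedness of $\tilde\triangle u_0$ — newly imposed in the hypothesis — is exactly what is needed to control $C_2$.
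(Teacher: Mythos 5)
Your proposal is correct and follows essentially the same route as the paper: subtract $u_0$, compare against the ODE barrier $h(t)=e^{C_1 t}\int_0^t e^{-C_1 s}C_2\,ds$ solving $h'=C_1h+C_2$, $h(0)=0$, and run the weighted $L^2$ Gronwall argument of Lemma \ref{lem:maximum} on $(u-u_0-h)^+$, with the sign check $h'-bh\ge C_2$ playing the same role as the paper's inequality $-C_1h\le -bh$. The only implicit point (shared with the paper) is that $u_0$ itself has bounded Dirichlet energy so that Lemma \ref{lem:basic} applies to $u-u_0$.
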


\begin{proof}
  It suffices to apply the method of proof in Lemma \ref{lem:maximum} to the equation of $u-u_0$ 
  \begin{equation*}
	  \partial_t(u-u_0) = a\tilde{\triangle} (u-u_0) + b(u-u_0) + f+ bu_0 +a\tilde{\triangle} u_0.
  \end{equation*}
  Let $h(t)$ be the solution of ODE 
  \begin{equation}\label{odeh}
    \frac{dh}{dt}=  C_1 h +C_2, \quad h(0)=0.
  \end{equation}
  Here $C_1$ is the $C^0$ norm of $b$ and $C_2$ is the $C_0$ norm of $f+ bu_0+a\tilde{\triangle} u_0$.
  \begin{eqnarray*}
	  \partial_t(u-u_0-h) &=&  a\tilde{\triangle} (u-u_0-h) + b(u-u_0)-C_1 h + f+ bu_0 +a\tilde{\triangle} u_0-C_2  \\
	  &\leq& a\tilde{\triangle} (u-u_0-h) + b(u-u_0)-b h, 
  \end{eqnarray*}
  where in the last step we make use of the fact that $h\geq 0$, which follows from (\ref{odeh}).

The same proof as in Lemma \ref{lem:maximum} shows the $u-u_0-h\leq 0$ for all $t$. The other side of estimate is similar.
\end{proof}

\section{Poisson equation}\label{sec:poisson}
In this section, we discuss Poisson equation 
\begin{equation}
	\tilde{\triangle} v=f
  \label{eqn:Poisson}
\end{equation}
on surfaces with conical singularities. 

There are two main results in this section. The first is about the existence of solution to (\ref{eqn:Poisson}) and the second is about the regularity of $v$. It should be noted that for a general conformal metric $g=e^{2u}\tilde{g}$ and the Poisson equation $\triangle v=f$, we can always move the conformal factor to the right hand side to reduce it to (\ref{eqn:Poisson}). This method applies to both the existence problem and the regularity issue.

\begin{thm}
  \label{thm:poisson1}
For any $f$ satisfying
  \begin{equation*}
	  \int_{S\setminus \set{p_i}} f^2 d\tilde{V} \leq C\quad \mbox{and} \quad \int_{S\setminus \set{p_i}} f d\tilde{V} =0,
  \end{equation*}
  there is a solution $v$ to (\ref{eqn:Poisson}). Moreover, $v$ is bounded with bounded Dirichlet energy. Among all bounded functions with bounded Dirichlet energy, the solution is unique up to a constant.
\end{thm}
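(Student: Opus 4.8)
The plan is to prove existence by a standard variational argument, using the finite Dirichlet energy hypothesis to control the relevant function space, and then to upgrade the minimizer to a bounded solution using the maximum principle machinery already available (Lemma \ref{lem:basic} and the Moser-type iteration implicit in the weighted H\"older theory).

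First I would set up the function space. Let $H$ be the completion, with respect to the Dirichlet norm $\left(\int_S \abs{\tilde\nabla \phi}^2 d\tilde V\right)^{1/2}$, of the space of Lipschitz functions on $S$ with mean value zero. Because the cone angles are less than $2\pi$, the cone tips have zero capacity, so smooth functions away from the singularities are dense and a Poincar\'e inequality $\int_S \phi^2 d\tilde V \le C \int_S \abs{\tilde\nabla\phi}^2 d\tilde V$ holds on $H$; this makes $H$ a genuine Hilbert space continuously embedded in $L^2(S,d\tilde V)$. The functional $E(\phi)=\frac12 \int_S \abs{\tilde\nabla \phi}^2 d\tilde V + \int_S f\phi\, d\tilde V$ is then coercive and weakly lower semicontinuous on $H$ (the linear term is bounded by $\norm{f}_{L^2}\norm{\phi}_{L^2}\le C\norm{f}_{L^2}\norm{\phi}_H$ using Poincar\'e), so it attains a minimum at some $v\in H$. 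The Euler--Lagrange equation is exactly $\tilde\triangle v = f$ in the weak (distributional, away from the tips) sense, and the compatibility condition $\int_S f\, d\tilde V=0$ is what made the problem well-posed on the mean-zero space. By construction $v$ has bounded Dirichlet energy.

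Next I would prove boundedness of $v$. Since $f\in L^2(S,d\tilde V)$ and near a cone tip the volume form is $r^{2\beta}\,dr\,d\theta$ with $\beta>-1$, the equation $\tilde\triangle v=f$ is, in the coordinate $\rho=\frac{1}{\beta+1}r^{\beta+1}$, a genuinely uniformly elliptic (in fact, in the smooth part, Laplace-type) equation with right-hand side in the appropriate weighted $L^p$; one runs the De Giorgi--Nash--Moser iteration on dyadic annuli shrinking toward the tip. Concretely, truncating $v$ at level $\ell$ and testing the weak equation against $(v-\ell)^+$ — which is legitimate because $v\in H$, and which is precisely the kind of integration by parts licensed by Lemma \ref{lem:basic} — gives a Caccioppoli inequality, and iterating over levels as in Moser yields an $L^\infty$ bound on $v$ in terms of $\norm{v}_{L^2}$ and $\norm{f}_{L^2}$, hence in terms of $C$. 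Alternatively one can quote the weighted H\"older/Schauder theory: since $f\in L^2$ one first gets $v$ in a weighted Sobolev space, and bootstrapping (using that the cone angle is $<2\pi$, which is exactly the regularity threshold discussed in Section \ref{sec:poisson}) places $v$ in a space embedding into $C^0$.

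Finally, uniqueness: if $v_1,v_2$ are two bounded solutions with bounded Dirichlet energy, then $w=v_1-v_2$ is bounded with bounded Dirichlet energy and satisfies $\tilde\triangle w=0$, so by the second identity in Lemma \ref{lem:basic}, $\int_S w\cdot \tilde\triangle w = -\int_S \abs{\tilde\nabla w}^2$, the left side is zero, hence $\tilde\nabla w\equiv 0$ and $w$ is constant. The main obstacle I anticipate is the boundedness step: establishing the $L^\infty$ bound near the cone tips requires either invoking the weighted Schauder theory of \cite{mine}/\cite{big3} carefully (checking $f\in L^2$ feeds into the right weighted space and that the resulting space embeds in $C^0$ precisely because the angle is $<2\pi$) or carrying out a Moser iteration adapted to the degenerate volume form; in either route, the scaling at the tip and the density of test functions (again using vanishing capacity of the tip) are the delicate points. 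The existence and uniqueness parts are essentially formal once the Hilbert space $H$ and Lemma \ref{lem:basic} are in hand.
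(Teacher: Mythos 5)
Your argument is correct in substance but follows a genuinely different route from the paper. For existence you minimize the Dirichlet functional on the mean-zero finite-energy space $H$, whereas the paper sidesteps the singular metric entirely: writing $\tilde{g}=wg_0$ with $w=r^{2\beta}$ near the tip for a smooth background metric $g_0$, it converts $\tilde{\triangle}v=f$ into the smooth Poisson equation $\triangle_{g_0}v=wf$, checks $\int_S wf\,dV_{g_0}=0$, and quotes classical solvability. The real divergence is in the boundedness step: the paper observes that $\beta>-1$ gives $w^{1/2}\in L^q(g_0)$ for some $q>2$, hence $wf\in L^{2q/(2+q)}(g_0)$ with exponent strictly greater than $1$, so standard $L^p$ theory plus the two-dimensional Sobolev embedding $W^{2,p}\hookrightarrow C^0$ ($p>1$) gives $v\in C^0$ and, by conformal invariance, finite Dirichlet energy --- a few lines, with no iteration. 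Your Moser/De Giorgi iteration adapted to the degenerate volume form would also work, but it forces you to establish the Sobolev and Poincar\'e inequalities for $L^2(d\tilde{V})$ against $\int_S\abs{\tilde{\nabla}\phi}^2\,d\tilde{V}$ by hand (which again comes down to the same H\"older-plus-Sobolev weight computation the paper uses, since the energy is conformally invariant but the $L^2$ norm is not); you assert these rather than prove them, so that is where the work you postponed actually lives. One genuine misstatement to fix: you invoke ``cone angle less than $2\pi$'' both for the zero-capacity/density claim and for the $C^0$ bound, but the theorem assumes only $\beta>-1$ and neither step needs the angle restriction --- a point has zero capacity in dimension two for any cone angle, and the paper's $L^p$ argument works for all $\beta>-1$; the threshold $\beta<0$ is relevant only to the later gradient estimate (Lemma \ref{lem:regularity2}), not here. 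Your uniqueness argument is identical to the paper's.
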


\begin{proof}
	Let $g_s$ be the metric compatible with the conformal structure of $S$ such that $\tilde{g}=w g_s$ near the cone tip and 
  \begin{equation*}
	  w=r^{2\beta},
  \end{equation*}
  where $r$ is the distance to the singular point with respect to $g_s$.
  
  We then consider the equation
  \begin{equation*}
	  \triangle_{g_s} v= w f,
  \end{equation*}
  which is equivalent to (\ref{eqn:Poisson}).
  By our assumption, we have
  \begin{equation*}
	  \int_{S\setminus \set{p_i}} w f dV_{g_s}=0.
  \end{equation*}
  Therefore, there is a solution $v$ to (\ref{eqn:Poisson}).
  Since $\beta>-1$, there exists some $q>2$ such that
  \begin{equation*}
	  w^{1/2}=r^{\beta} \in L^q(g_s).
  \end{equation*}
  Hence, we have $ w f\in L^{\frac{2q}{2+q}}(g_s)$ because $ w^{1/2} f\in L^2(g_s)$.
  The boundedness of $v$ follows from Sobolev embedding, $L^p$ estimate and the fact $\frac{2q}{2+q}>1$. For the same reason,
  \begin{equation*}
	  \int_{S\setminus \set{p_i}} \abs{\tilde{\nabla} v}^2 d\tilde{V}=\int_{S\setminus \set{p_i}} \abs{\nabla_{g_s} v}^2 dV_{g_s}\leq C.
  \end{equation*}
  For the uniqueness, it suffices to show the only bounded harmonic functions with bounded Dirichlet energy are constants. This is true by integration by parts (Lemma \ref{lem:basic}). 
\end{proof}

The rest of this section is devoted to the regularity issue of the Poisson equation. {We note that the results presented here are known and they should follow rather easily from the theory of edge calculus developed in \cite{Melrose} and \cite{Mazzeo} (see also \cite{big3}). In what follows, we give a self-contained and elementary proof.} The results are useful when we try to apply the maximum principle to $\abs{\nabla f}^2$ for $\triangle f=R-r$.

For simplicity, assume that there is only one conical singularity on $S$. Recall that $\tilde{g}$ is a fixed reference metric which is flat near the cone tip.
\begin{lem}
	\label{lem:regularity2}
	Assume that $\beta<0$. Let $v$ be a solution to
	\begin{equation*}
		\tilde{\triangle}v=f.
	\end{equation*}
	If $v$ is bounded with bounded Dirichlet energy and $f$ is bounded, then $\abs{\tilde{\nabla} v}^2$ is bounded and has bounded Dirichlet energy.
\end{lem}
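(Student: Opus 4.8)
The plan is to work in a neighborhood $U$ of the cone tip where $\tilde g = r^{2\beta}(dr^2 + r^2 d\theta^2)$ is the flat cone metric, since away from the singularity standard interior elliptic estimates give boundedness of $\abs{\tilde\nabla v}^2$ and all its derivatives, so the only issue is the behavior as $r\to 0$. Near the tip I would pass to the flat coordinate $\rho = \frac{1}{\beta+1}r^{\beta+1}$, in which $\tilde g$ is the Euclidean metric on a cone of total angle $\theta = 2\pi(\beta+1) < 2\pi$ (here we use $\beta<0$), so the Poisson equation becomes the ordinary flat Laplace equation $\triangle v = f$ with $f$ bounded on a neighborhood of the vertex of a Euclidean cone whose angle is \emph{less} than $2\pi$.

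The heart of the matter is a gradient estimate for the flat Poisson equation on a cone with small angle. First I would subtract off a particular solution: choosing $v_1$ with $\triangle v_1 = f$ on a slightly larger cone-ball and $\norm{v_1}_{C^{1,\alpha}}\le C\norm{f}_{C^0}$ (for instance the Newtonian-type potential, which on a cone with angle $\le 2\pi$ is $C^{1,\alpha}$ up to the vertex — this is exactly where the restriction $\beta<0$ enters, via the weighted/edge Hölder theory of \cite{mine}, equivalently \cite{big3}), one reduces to the case $f\equiv 0$, i.e. $w := v - v_1$ is harmonic, bounded, and has bounded Dirichlet energy on the punctured cone-ball. A bounded harmonic function on the punctured cone extends across the vertex (removable singularity, since the vertex has zero capacity), and then the separation-of-variables expansion $w = a_0 + \sum_{k\ge 1} r^{k(\beta+1)}(a_k\cos k\theta + b_k\sin k\theta) + \ldots$ in the flat coordinate $\rho$ shows that $\abs{\tilde\nabla w}$ is controlled by $\rho^{(\beta+1)-1}\cdot(\text{angular factor})$; since $\beta+1 = \theta/2\pi > 0$ the lowest nonconstant mode already has a nonnegative power of $\rho$, so $\abs{\tilde\nabla w}$ is bounded — and in fact $\abs{\tilde\nabla w}\to 0$ or stays bounded — near the vertex, with the bound depending on the $C^0$ norm and Dirichlet energy of $w$. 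Combining, $\abs{\tilde\nabla v}$ is bounded. (Alternatively, one can quote Lemma \ref{lem:regularity2}'s elliptic sibling directly from the weighted Schauder estimate: $v\in\mathcal E^{1,\alpha}$ when $\tilde\triangle v\in\mathcal E^{0,\alpha}$ and $\beta<0$, and $\mathcal E^{1,\alpha}$ membership includes boundedness of the gradient.)

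Finally, for the Dirichlet energy of $h := \abs{\tilde\nabla v}^2$ I would compute the Bochner formula $\tilde\triangle h = 2\abs{\tilde\nabla^2 v}^2 + 2\tilde\nabla v\cdot\tilde\nabla(\tilde\triangle v) = 2\abs{\tilde\nabla^2 v}^2 + 2\tilde\nabla v\cdot\tilde\nabla f$ away from the tip (the flat metric has no curvature term there), and then estimate $\int \abs{\tilde\nabla h}^2$. One cannot use $f\in C^0$ alone for the term $\tilde\nabla f$, but since we only need $v$ bounded, $f$ bounded, and the conclusion is about $h$, I would instead bound $\int_{B(r)}\abs{\tilde\nabla h}^2$ on shrinking cone-annuli by rescaled interior estimates: on the annulus $\{r/2 < \rho' < 2r\}$ one has, by scaling the flat interior Schauder/$L^p$ estimate, $\sup\abs{\tilde\nabla^2 v}\le C(r^{-1}\mathrm{osc}\, v + r\norm{f}_{C^0}) \le C(r^{-1} + 1)$, hence $\int_{\{r/2<\rho'<2r\}}\abs{\tilde\nabla h}^2 \lesssim r^{-4}\cdot r^2 = r^{-2}$... this naive bound is not summable, so the real work is to extract the \emph{decay} of the oscillation of $v$ and of $\abs{\tilde\nabla v}$ from the mode expansion above: the nonconstant part of $v$ decays like $\rho'^{\,\beta+1}$, which makes $\abs{\tilde\nabla^2 v}$ on the annulus of scale $r$ bounded by $C r^{(\beta+1)-2}\cdot$(dyadic factor) and $\int\abs{\tilde\nabla h}^2$ over that annulus bounded by $C r^{2(\beta+1)-2}$; summing the geometric series in $r = 2^{-j}$ converges precisely because $\beta+1>0$. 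I expect this last step — promoting the bare boundedness of $\abs{\tilde\nabla v}^2$ to the \emph{finiteness of its Dirichlet energy}, which forces one to track the rate of decay at the tip rather than just a pointwise bound — to be the main obstacle, and the cleanest route is to phrase everything through the weighted Hölder space $\mathcal E^{2,\alpha}$ (valid for $\beta<0$) so that $v\in\mathcal E^{2,\alpha}$ automatically encodes both $\abs{\tilde\nabla v}\in L^\infty$ and the decay of $\tilde\nabla^2 v$ needed for $\int\abs{\tilde\nabla h}^2<\infty$.
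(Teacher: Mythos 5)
Your overall strategy (pass to the flat cone, subtract a particular solution of $\triangle v_1=f$, expand the bounded harmonic remainder in separated modes) is a legitimate alternative to the paper's argument, but as written the key exponents are wrong in a way that breaks both halves of your proof. On a cone of total angle $2\pi(1+\beta)$ the separated harmonic modes are $\rho^{k/(1+\beta)}$ times angular factors (equivalently $\mathrm{Re}(z^k)$, i.e.\ $r^k$, in the conformal coordinate $z$ where $\tilde g=\abs{z}^{2\beta}\abs{dz}^2$), not $r^{k(\beta+1)}$. The cone-metric gradient of the lowest nonconstant mode is of size $\rho^{1/(1+\beta)-1}=\rho^{-\beta/(1+\beta)}$, and it is the positivity of $-\beta/(1+\beta)$ --- i.e.\ exactly $\beta<0$ --- that gives boundedness; your exponent $(\beta+1)-1=\beta$ is negative, so your own formula contradicts your assertion that the power is nonnegative. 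The same slip propagates to the energy estimate: your dyadic bound $r^{2(\beta+1)-2}=r^{2\beta}$ gives the series $\sum_j 2^{-2\beta j}$, which diverges for $\beta<0$, the opposite of what you claim. With the corrected rate $\rho^{-\beta/(1+\beta)}$ the series does converge, so the plan is salvageable, but the proof as stated does not close.

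Two further substantive gaps. First, the regularity of the particular solution $v_1$ --- that $\triangle v_1=f$ with $f$ merely bounded has gradient bounded up to the vertex of a cone of angle less than $2\pi$ --- is essentially the content of the lemma for the inhomogeneous part and cannot just be quoted. The paper proves it by working in the conformal coordinate: $\triangle_{g_0}v=r^{2\beta}f\in L^q(g_0)$ for some $q>2$ precisely when $-\tfrac12<\beta<0$, whence $v\in W^{2,q}\hookrightarrow C^{1,\alpha}(g_0)$ and $\abs{\tilde\nabla v}^2=r^{-2\beta}\abs{\nabla_{g_0}v}^2$ is bounded; the range $-1<\beta\le-\tfrac12$ is then reduced to this case by an $m$-fold isometric covering by a cone of order $\beta_0\in(-\tfrac12,0]$. (Your identification of $v$ with the "usual" solution via uniqueness of bounded, finite-energy harmonic functions on the punctured disc matches the paper's step and is fine.) Second, your fallback of deducing $\abs{\tilde\nabla v}\in L^\infty$ from $v\in\mathcal E^{1,\alpha}$ or $\mathcal E^{2,\alpha}$ does not work: these weighted norms are the scale-invariant ones obtained from interior estimates on balls away from the tip, so they control $\rho\abs{\tilde\nabla v}$ and $\rho^2\abs{\tilde\nabla^2 v}$ rather than $\abs{\tilde\nabla v}$ itself --- which is exactly why Lemma~\ref{lem:regularity2} needs a separate proof. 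Note also that the paper's $L^q$ route settles the Dirichlet energy of $\abs{\tilde\nabla v}^2$ in one line ($\nabla_{g_0}^2v\in L^q$ with $q>2$ together with $r^{-2\beta-1}\in L^2(r\,dr)$ for $\beta<0$), with no dyadic bookkeeping.
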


{\begin{rem}
	The result of this lemma is not true for $\beta>0$. For example, consider the cone metric $\tilde{g}=r^{2\beta} (dx^2+dy^2)$ with $\beta>0$. The function $f(x,y)=x$ is harmonic with bounded Dirichlet energy (check this by using the trivial metric which is conformal to $\tilde{g}$). However, 
	\begin{equation*}
		\abs{\tilde{\nabla} v}^2_{\tilde{g}}= r^{-2\beta},
	\end{equation*}
	which is not bounded for $\beta>0$.
\end{rem}
}

\begin{proof}
In a neighborhood $U$ of the singular point, we have local coordinates so that
\begin{equation*}
	\tilde{g}= r^{2\beta} (dr^2 +r^2 d\theta^2) = r^{2\beta} (dx^2+dy^2).
\end{equation*}
As above, we denote the flat metric $dx^2+dy^2$ by $g_s$ and write $W^{2,p}(g_s)$ for the Sobolev space with respect to $g_s$.

Assume first that $0>\beta>-\frac{1}{2}$. In this case,
\begin{equation*}
	\triangle_{g_s} v = r^{2\beta} f \in L^q(g_s)
\end{equation*}
for some $q>2$.

We claim that $v\in W^{2,q}(g_s)$. (Note that since $v$ satisfies the above equation away from the origin, the claim does not follow directly from usual $L^p$ estimate.) To see this, let $\bar{v}$ be the usual solution of
\begin{equation*}
	\triangle_{g_s} \bar{v}= r^{2\beta} f
\end{equation*}
with boundary value  $\bar{v}=v$ on $\set{r=1}$. We know $\bar{v}\in W^{2,q}(g_s)$. On the other hand, the difference $\bar{v}-v$ is a harmonic function defined on $\set{0<r<1}$ and vanishes on $\set{r=1}$. Moreover, it is bounded and has bounded Dirichlet energy. Hence it is zero by Lemma \ref{lem:basic} and $v=\bar{v}$.

By the Sobolev embedding, $\partial_x v$ and $\partial_y v$ is bounded. Hence, $\abs{\tilde{\nabla} v}^2$ is bounded because
\begin{equation*}
	\abs{\tilde{\nabla} v}^2 = r^{-2\beta} \left( (\partial_x v)^2+ (\partial_y v)^2 \right).
\end{equation*}
Since the Dirichlet energy is conformal invariant, we may compute with respect to $g_s$. 
\begin{eqnarray*}
	\partial_x (\abs{\tilde{\nabla} v}^2 ) &=&  (-2\beta) r^{-2\beta-1} \frac{x}{r} \left( (\partial_x v)^2+ (\partial_y v)^2 \right) \\
	&& + r^{-2\beta} \left( 2\partial^2 _{xx} v \partial_x v + 2 \partial^2_{xy} v \partial_y v \right).
\end{eqnarray*}
This is square integrable because $\partial v$ is bounded and $\partial^2 v$ is in $L^q$ for $q>2$. This finishes the proof when $0>\beta>-1/2$. 

If $-1<\beta\leq -1/2$, we can find positive integer $m$ and $\beta_0\in (-1/2,0]$ such that $1+\beta_0=m(1+\beta)$. Hence, we may consider a cone of order $\beta_0$, which is $m-$fold cover of the original one. Then the lemma with cone of order $\beta$ follows from the one with cone of order $\beta_0$. Precisely, by setting $\rho=\frac{1}{1+\beta} r^{\beta+1}$, we have
\begin{equation*}
	\tilde{g}= d\rho^2 + (1+\beta)^2 \rho^2 d\theta^2.
\end{equation*}
Consider another cone of order $\beta_0$, whose metric is given by
\begin{equation*}
	\hat{g}= d\rho^2 + (1+\beta_0)^2 \rho^2 d\eta^2.
\end{equation*}
The map $\Psi$ from $(\rho,\eta)$ to $(\rho, m\eta \mbox{ mod} \, 2\pi)$ is an $m-$fold isometric covering. By setting $\hat{v}=v\circ \Psi$ and $\hat{f}=f\circ \Psi$, we have
\begin{equation*}
	\triangle_{\hat{g}} \hat{v}=\hat{f}.
\end{equation*}
Since $\hat{f}$, $\hat{v}$ is bounded and $\hat{v}$ has bounded Dirichlet energy, we know $\abs{\hat{\nabla} \hat{v}}^2$ is bounded and has bounded Dirichlet energy. So is $v$ and the lemma is proved. 
\end{proof}

\begin{cor}
	\label{cor:regular}
	If $g=e^{2u}\tilde{g}$ for some $u$ bounded and with bounded Dirichlet energy, $v$ is a solution to
	\begin{equation*}
		\triangle_g v=f
	\end{equation*}
	for bounded $f$. If $v$ is bounded with bounded Dirichlet energy, so is $\abs{\nabla v}^2$.
\end{cor}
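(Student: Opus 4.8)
The plan is to reduce the statement to Lemma \ref{lem:regularity2} by using the conformal covariance of the Laplacian in dimension two. Since $g=e^{2u}\tilde{g}$, we have $\triangle_g=e^{-2u}\tilde{\triangle}$, so the equation $\triangle_g v=f$ is equivalent to
\begin{equation*}
  \tilde{\triangle} v = e^{2u}f.
\end{equation*}
Because $u$ and $f$ are bounded, the right hand side $\hat{f}:=e^{2u}f$ is bounded. The Dirichlet energy is a conformal invariant in two dimensions, so $v$ being bounded with bounded Dirichlet energy with respect to $g$ is the same as being bounded with bounded Dirichlet energy with respect to $\tilde{g}$. Hence, under the standing assumption that the cone orders are negative, Lemma \ref{lem:regularity2} applies to $v$ and $\hat{f}$ and yields that $\abs{\tilde{\nabla} v}^2$ is bounded and has bounded Dirichlet energy. (Near the singular point the required $W^{2,p}$ regularity of $v$ comes from the proof of that lemma; away from it, boundedness of $\hat{f}$ gives local $W^{2,p}$ regularity by ordinary $L^p$ estimates.)

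Next I would translate the conclusion back to the metric $g$. Again by conformal covariance, $\abs{\nabla v}^2_g=e^{-2u}\abs{\tilde{\nabla} v}^2$, which is bounded since $u$ is bounded and $\abs{\tilde{\nabla} v}^2$ is bounded. For the Dirichlet energy of $\abs{\nabla v}^2_g$ — which, being conformally invariant, may once more be computed with respect to $\tilde{g}$ — I would differentiate:
\begin{equation*}
  \tilde{\nabla}\left(e^{-2u}\abs{\tilde{\nabla} v}^2\right) = -2 e^{-2u}\abs{\tilde{\nabla} v}^2\,\tilde{\nabla} u + e^{-2u}\,\tilde{\nabla}\left(\abs{\tilde{\nabla} v}^2\right).
\end{equation*}
The second term lies in $L^2(\tilde{g})$ because $u$ is bounded and $\abs{\tilde{\nabla} v}^2$ has bounded Dirichlet energy. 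The first term lies in $L^2(\tilde{g})$ because $e^{-2u}$ and $\abs{\tilde{\nabla} v}^2$ are bounded while $\tilde{\nabla} u\in L^2(\tilde{g})$, which is precisely the statement that $u$ has bounded Dirichlet energy. This gives the desired bound on the Dirichlet energy of $\abs{\nabla v}^2_g$ and finishes the proof. When there are several conical points one applies Lemma \ref{lem:regularity2} near each of them and uses interior elliptic regularity away from the singular set.

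I do not expect a serious obstacle here: the only points requiring care are bookkeeping ones, namely that the two passages between $g$ and $\tilde{g}$ (at the level of the equation, and at the level of the conclusion) really are equivalences — which uses the conformal invariance of the Dirichlet integral together with the fact that $e^{\pm 2u}$ is squeezed between two positive constants since $u$ is bounded — and that the hypotheses of Lemma \ref{lem:regularity2} are genuinely met, in particular the negativity of the cone orders that is inherited from the assumption $\beta<0$ under which that lemma is stated.
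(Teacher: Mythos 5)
Your proposal is correct and follows essentially the same route as the paper: move the conformal factor to the right-hand side to get $\tilde{\triangle}v=e^{2u}f$, invoke Lemma \ref{lem:regularity2}, and transfer the conclusion back via $\abs{\nabla v}^2=e^{-2u}\abs{\tilde{\nabla}v}^2$ using the boundedness and finite Dirichlet energy of $u$. Your explicit product-rule computation for the energy of $e^{-2u}\abs{\tilde{\nabla}v}^2$ just spells out the step the paper leaves to the reader.
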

\begin{proof}
	Moving the conformal factor to the right, we have
	\begin{equation*}
		\tilde{\triangle} v= e^{2u} f.
	\end{equation*}
	Lemma \ref{lem:regularity2} implies that $\abs{\tilde{\nabla}v}^2$ is bounded and has bounded Dirichlet energy. The claim follows from our assumption on $u$ and the fact that
	\begin{equation*}
		\abs{\nabla v}^2 = e^{-2u} \abs{\tilde{\nabla}v}^2.
	\end{equation*}
\end{proof}

\section{Local existence}
\label{sec:iteration}
The purpose of this section is to show a local solution to the normalized Ricci flow if the initial metric is good.

\begin{thm}\label{thm:short}
	If $u_0$ and $K_{0}$ are bounded in $\mathcal E^{2,\alpha}$, $\tilde{\triangle} K_{0}$ is bounded and $u_0$ and $K_0$ have finite Dirichlet energy, then there exists some $T>0$ depending on the $\mathcal E^{2,\alpha}$ norms of $u_0$, $K_{u_0}$ and $C^0$ norm of $\tilde{\triangle} K_0$, such that we have a solution $u(x,t), t\in [0,T)$ with $u_0$ as initial value satisfying
  \begin{equation}
	  \pfrac{u}{t}=e^{-2u}\tilde{\triangle} u +\frac{r}{2}-e^{-2u} \tilde{K}.
    \label{eqn:rf}
  \end{equation}
\end{thm}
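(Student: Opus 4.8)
The plan is to realise (\ref{eqn:rf}) as the fixed point of a map built from the linear theory of Section \ref{sec:linear}, iterating in a function space that controls $u$ and $\partial_t u$ \emph{at the same regularity level} --- this "double--layer" iteration is exactly what the estimates in Lemmas \ref{lem:two} and \ref{lem:energytwo} were prepared for, and it is what will force the solution to have bounded Gauss curvature. Given a function $u$, put $a[u]=e^{-2u}$, $f[u]=\frac{r}{2}-e^{-2u}\tilde{K}$, and let $\Phi(u)=v$ be the solution of the \emph{linear} problem $\partial_t v=a[u]\tilde{\triangle}v+f[u]$, $v|_{t=0}=u_0$, furnished by Lemma \ref{lem:one}. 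A fixed point of $\Phi$ solves (\ref{eqn:rf}). I would work on the set $\mathcal X_T$ of functions $v$ with $v|_{t=0}=u_0$ such that $v,\partial_t v\in\mathcal P^{2,\alpha,T}$ with norm $\leq M$, such that $v$ and $\partial_t v$ have uniformly bounded Dirichlet energy $\leq M$, and such that $\norm{v-u_0}_{C^0}+\norm{\partial_t v-(\frac{r}{2}-K_0)}_{C^0}\leq\eps$; here $M$ is a constant determined by the data, and $\eps$ (hence $T$) will be taken small. The quantity $\frac{r}{2}-K_0$ appears because, for any $v$ with $v|_{t=0}=u_0$, the initial value of $\partial_t v=a[u]\tilde{\triangle}v+f[u]$ is $e^{-2u_0}\tilde{\triangle}u_0+\frac{r}{2}-e^{-2u_0}\tilde{K}=\frac{r}{2}-K_0$, using the conformal change formula $K_0=e^{-2u_0}(\tilde{K}-\tilde{\triangle}u_0)$.

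The first task is to verify the hypotheses of Lemmas \ref{lem:one}, \ref{lem:two}, \ref{lem:energy}, \ref{lem:energytwo} for $a=a[u]$, $f=f[u]$ when $u\in\mathcal X_T$, and to deduce that $\Phi$ maps $\mathcal X_T$ into itself for $T$ small. Since $u$ is bounded, $a[u]>c>0$ uniformly; $a[u],f[u]\in\mathcal P^{0,\alpha,T}$, and $\partial_t a[u]=-2e^{-2u}\partial_t u$, $\partial_t f[u]=2e^{-2u}\tilde{K}\,\partial_t u$ lie in $\mathcal P^{0,\alpha,T}$ with norms and Dirichlet energies controlled in terms of $M$ (using the algebra properties of the weighted spaces and $u_0,K_0\in\mathcal E^{2,\alpha}$ of finite energy). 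The crucial point is that the data appearing in the $\partial_t v$--layer are \emph{fixed}: $\tilde{\triangle}u_0=\tilde{K}-e^{2u_0}K_0\in\mathcal E^{2,\alpha}$ with finite energy, $a[u](\cdot,0)=e^{-2u_0}\in\mathcal E^{2,\alpha}$, $f[u](\cdot,0)\in\mathcal E^{2,\alpha}$, and the initial value of $\partial_t v$ is $\frac{r}{2}-K_0$, whose Laplacian $-\tilde{\triangle}K_0$ is bounded by hypothesis. Lemmas \ref{lem:one}--\ref{lem:energytwo} then bound $\norm{v}_{\mathcal P^{2,\alpha,T}}$, $\norm{\partial_t v}_{\mathcal P^{2,\alpha,T}}$ and the two Dirichlet energies; using the Remark that the constant in Lemma \ref{lem:one} depends only on $\max(T,1)$ (so that for $T\leq1$ it is a fixed constant, once the $M$--dependent zeroth order coefficient $-2\partial_t u$ in the $\partial_t v$--equation is absorbed by taking $T\leq c/M$), one fixes $M$ so that these bounds close. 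The smallness of $\norm{v-u_0}_{C^0}$ and $\norm{\partial_t v-(\frac{r}{2}-K_0)}_{C^0}$ --- both $O(T)$ --- comes from applying the $C^0$ estimate of Lemma \ref{lem:c0} to $v$ (which needs $\tilde{\triangle}u_0$ bounded) and to $\partial_t v$ (which satisfies a linear equation of type (\ref{eqn:one}) with initial value $\frac{r}{2}-K_0$ and needs $\tilde{\triangle}K_0$ bounded --- this is where that hypothesis enters).

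Next I would show that $\Phi$ is a contraction in the $C^0$ metric for $T$ small. For $u_1,u_2\in\mathcal X_T$ the difference $w=\Phi(u_1)-\Phi(u_2)$ solves
\begin{equation*}
  \partial_t w=e^{-2u_1}\tilde{\triangle}w+\left(e^{-2u_1}-e^{-2u_2}\right)\left(\tilde{\triangle}\Phi(u_2)-\tilde{K}\right),\qquad w|_{t=0}=0,
\end{equation*}
and since $\tilde{\triangle}\Phi(u_2)-\tilde{K}$ is bounded (by the $M$--bound) and $\abs{e^{-2u_1}-e^{-2u_2}}\leq C\norm{u_1-u_2}_{C^0}$, the maximum principle of Lemma \ref{lem:maximum} gives $\norm{w}_{C^0}\leq CT\norm{u_1-u_2}_{C^0}$. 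Because $\mathcal P^{2,\alpha,T}$--bounded families together with the energy and $C^0$ bounds defining $\mathcal X_T$ are preserved under $C^0$--convergence, $(\mathcal X_T,\norm{\cdot}_{C^0})$ is a complete metric space, and the contraction mapping theorem produces a (unique in $\mathcal X_T$) fixed point $u$. This $u$ solves (\ref{eqn:rf}), lies in $\mathcal P^{2,\alpha,T}$ with $\partial_t u\in\mathcal P^{2,\alpha,T}$, and therefore $\tilde{\triangle}u$ --- hence $K_t=e^{-2u}(\tilde{K}-\tilde{\triangle}u)$ --- is bounded on $[0,T)$.

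I expect the main obstacle to be closing the double--layer iteration quantitatively: the self--map property in the $\partial_t u$ variable is delicate because the linearised equation for $\partial_t v$ has a zeroth order coefficient $-2\partial_t u$ and an inhomogeneous term $r\,\partial_t u$ whose norms are only controlled by the ambient constant $M$, so one must exploit the structure of this equation (it is \emph{linear} in $\partial_t v$ with $u$ and $\partial_t u$ as coefficients) together with the smallness of $T$ relative to $M$ to keep the bounds from degrading --- just as one cannot iterate the quasilinear equation (\ref{eqn:rf}) directly in $\mathcal P^{2,\alpha,T}$, which is precisely why the contraction is run in the weak $C^0$ norm and the strong bounds are recovered a posteriori from the linear theory. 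The degeneracy at the cone tips never appears explicitly; it is entirely packaged into the weighted linear estimates and into Lemma \ref{lem:basic} and the maximum principles, but it is the reason this whole scheme is needed.
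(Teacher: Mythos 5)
Your proposal follows essentially the same route as the paper: iterate the linearized equation $\partial_t u = e^{-2v}\tilde{\triangle} u + \frac{r}{2} - e^{-2v}\tilde{K}$, control both $u$ and $\partial_t u$ simultaneously via the double-layer linear estimates (Lemmas \ref{lem:one}, \ref{lem:two}, \ref{lem:energy}, \ref{lem:energytwo}), get the needed smallness from the $C^0$ estimate of Lemma \ref{lem:c0} applied to both layers (correctly identifying this as the one place the bound on $\tilde{\triangle} K_0$ enters), and close by a contraction in a weak norm. The only cosmetic difference is that you make the $C^0$-contraction explicit, whereas the written proof passes to a convergent subsequence of the iterates after establishing the uniform bounds (with the contraction alluded to in the introduction); both are sound.
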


\begin{proof}
	It follows immediately from the formula $K=e^{-2u}(-\tilde{\triangle} u+\tilde{K})$ that $\tilde{\triangle} u_0$ is also bounded in $\mathcal E^{2,\alpha}$.

  Let $C_0$ be a constant depending only on an upper bound of $\mathcal E^{2,\alpha}$ norms of $u_0$ and $\tilde{\triangle} u_0$ and $T$ be small positive number to be determined later. We shall construct a sequence of $u_i$ satisfying for $i\geq 1$,

  (a)
  \begin{equation*}
	  \norm{u_i-u_0}_{\mathcal P^{0,\alpha,T}} + \norm{\partial_t u_i- \left( e^{-2u_0}\tilde{\triangle} u_0+\frac{r}{2}-e^{-2u_0}\tilde{K} \right)}_{\mathcal P^{0,\alpha,T}}\leq C_0,
  \end{equation*}
  where $C_0$ is larger than $\mathcal E^{2,\alpha}$ norms of $u_0$ and $\tilde{\triangle} u_0$. It follows immediately that
  \begin{equation*}
	  \norm{u_i}_{\mathcal P^{0,\alpha,T}} + \norm{\partial_t u_i}_{\mathcal P^{0,\alpha,T}}\leq C(C_0);
  \end{equation*}

  (b) $u_i\in \mathcal P^{2,\alpha,T}$ satisfies
  \begin{equation}
	    \pfrac{u_i}{t}=e^{-2u_{i-1}}\tilde{\triangle} u_i +\frac{r}{2}-e^{-2u_{i-1}} \tilde{K}
    \label{eqn:linear}
  \end{equation}
  with initial condition
  \begin{equation*}
	  u_i(x,0)=u_0(x);
  \end{equation*}

  (c) For any $t\in [0,T]$, $u_i(t)$ and $\partial_t u_i (t)$ have bouned Dirichlet energy.

  The construction is by induction and obviously $u_0(x,t)=u_0(x)$ satisfies the above (a) and (c). Now assume $u_{i-1}$ is known. We take $u_i$ to be the solution of (\ref{eqn:linear}) with initial value condition $u_i(x,0)=u_0$ given by Lemma \ref{lem:one} so that (b) holds. Lemma \ref{lem:energy} and Lemma \ref{lem:energytwo} implies that (c) holds. It remains to check that the estimates in (a) holds for $u_i$ as long as $T$ is small. This is done in several steps.

  (1) The $\mathcal P^{0,\alpha,T}$ norm of $a=e^{-2u_{i-1}}$ and $f=\frac{r}{2}-e^{-2u_{i-1}}\tilde{K}$ are bounded by $\mathcal P^{0,\alpha,T}$ norm of $u_{i-1}$; $u_0\in \mathcal E^{2,\alpha}$; Hence, Lemma \ref{lem:one} implies that
  \begin{equation*}
	  \norm{u_i}_{\mathcal P^{2,\alpha,T}}\leq C (C_0).
  \end{equation*}

  (2) $a(x,0)$ and $f(x,0)$ depend only on $u_{i-1}(x,0)=u_0(x)$, hence their $\mathcal E^{2,\alpha}$ norms are bounded; by assumption $\mathcal E^{2,\alpha}$ norm of $\tilde{\triangle} u_0$ is also bounded; by (a),
  \begin{equation*}
	  \norm{\partial_t a}_{\mathcal P^{0,\alpha,T}} +\norm{\partial_t f}_{\mathcal P^{0,\alpha,T}}\leq C(C_0).
  \end{equation*}
  Then by Lemma \ref{lem:two}, we have
  \begin{equation*}
	  \norm{\partial_t u_i}_{\mathcal P^{2,\alpha,T}}\leq C(C_0).
  \end{equation*}

  (3) By (1) and (2), 
  \begin{equation*}
	  \norm{u_i-u_0}_{\mathcal P^{2,\alpha,T}}+\norm{\partial_t u_i- \left( e^{-2u_0}\tilde{\triangle} u_0+\frac{r}{2}-e^{-2u_0}\tilde{K} \right)}_{\mathcal P^{2,\alpha,T}}\leq C(C_0).
  \end{equation*}

  (4) Applying Lemma \ref{lem:c0} to the equations of both $u_i$ and $\partial_t u_i$, we have
  \begin{equation*}
	  \norm{u_i-u_0}_{C_0}+\norm{\partial_t u_i- \left( e^{-2u_0}\tilde{\triangle} u_0+\frac{r}{2}-e^{-2u_0}\tilde{K} \right)}_{C_0}\leq e^{C_1 T}\int_0^T e^{-C_1 s} C_2 ds.
  \end{equation*}
  Here $C_1$ depends only on $C_0$ and $C_2$ depends on both $C_0$ and $\tilde{\triangle} K_0$(see remark below). It implies that we can choose $T$ small so that the above is smaller than anything we want.
  \begin{rem} 
	  $e^{-2u_0}\tilde{\triangle} u_0+\frac{r}{2}-e^{-2u_0}\tilde{K}$ is just $\frac{r}{2}-K_{0}$. Lemma \ref{lem:c0} says the $C^0$ norm of $\tilde{\triangle} (\frac{r}{2}-K_{0})$ is involved in the determination of the above constants. In fact, this is the only place where we use the assumption on the bound of $\tilde{\triangle} K_{0}$. 
\end{rem}

(5) We then combine (3) and (4) using the interpolation inequality 
\begin{equation}\label{eqn:inter}
	\norm{w}_{\mathcal P^{0,\alpha,T}}\leq C \norm{w}_{C^0}^{1-\alpha} \norm{w}_{\mathcal P^{2,\alpha,T}}^{\alpha}
\end{equation}
to get (a). (\ref{eqn:inter}) follows from the interpolation of usual H\"older norms
\begin{equation*}
	\norm{w}_{C^\alpha(\Omega\times [0,T])}\leq C \norm{w}^{1-\alpha}_{C^0(\Omega\times [0,T])} \norm{w}^\alpha_{C^{0,1} (\Omega\times [0,T])},
\end{equation*}
a proof of which can be found in the proof of Lemma 3.4 in \cite{mine}.

Once $u_i$ is constructed, it follows from (1) and (2) above that $\mathcal P^{2,\alpha,T}$ norms of $u_i$ are uniformly bounded. This implies that $u_i$ subconverges. We claim if $T$ is chosen to be small, then $\lim_{i\to \infty} u_i$ exists and it is the solution we want. To see this, subtract (\ref{eqn:linear}) for $i$ and $i+1$ to get
\begin{equation*}
	\pfrac{}{t}(u_{i+1}-u_i) = e^{-2 u_{i}} \tilde{\triangle} (u_{i+1}-u_i) + (e^{-2 u_{i}}-e^{-2u_{i-1}})\tilde{\triangle} u_i -(e^{-2 u_{i-1}}- e^{-2u_i})\tilde{K}.
\end{equation*}
By (1) and (2) above, we have
\begin{equation*}
	\abs{(e^{-2 u_{i}}-e^{-2u_{i-1}})\tilde{\triangle} u_i -(e^{-2 u_{i-1}}- e^{-2u_i})\tilde{K}} \leq C(C_0) \norm{u_i-u_{i-1}}_{C^0(S\times [0,T])}.
\end{equation*}
Since $u_i$ and $u_{i+1}$ are bounded and have bounded energy, we can apply Lemma \ref{lem:c0} to see
\begin{equation*}
	\norm{u_{i+1}-u_i}_{C^0(S\times [0,T])}\leq T C(C_0) \norm{u_i-u_{i-1}}_{C^0(S\times [0,T])}.
\end{equation*}
Hence, if we choose $T$ small, then the sequence $u_i$ is Cauchy in $C^0$ norm and hence the limit is unique and the theorem is proved.
\end{proof}

\begin{cor}
  Let $u(t)$ be a local solution to (\ref{eqn:rf}) given above.
  If 
  \begin{equation*}
    r=\frac{4\pi\chi(S,\beta)}{{V_0}},
  \end{equation*}
  then the volume of $g(t)$ is a constant and
  \begin{equation*}
    2\pi \chi(S,\beta)=\int_{S\setminus \set{p_i}} K_t dV_t.
  \end{equation*}
\end{cor}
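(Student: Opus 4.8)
The plan is to differentiate the total volume in $t$ and show the derivative vanishes for the stated choice of $r$, then feed this back into the Gauss--Bonnet identity. Writing $dV_t = e^{2u}\,d\tilde V$, we have $\partial_t dV_t = 2(\partial_t u)\,e^{2u}\,d\tilde V = (r-R)\,dV_t$, using the normalized Ricci flow equation in the form $\partial_t u = \tfrac12(r-R)$ (equivalently (\ref{eqn:rf}), since $R = 2K = 2e^{-2u}(-\tilde\triangle u + \tilde K)$). Hence
\begin{equation*}
  \frac{d}{dt} V(g_t) = \int_S (r - R)\,dV_t = r\,V(g_t) - \int_S R\,dV_t.
\end{equation*}
The first step, and the one where the conical singularities must be handled with care, is to justify that $\int_S R\,dV_t = \int_S 2K_t\,dV_t = 4\pi\chi(S,\beta)$, i.e.\ that Gauss--Bonnet holds for $g_t$ at every fixed $t$. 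This is where the regularity from Theorem \ref{thm:short} enters: $u(\cdot,t)$ is bounded with bounded Dirichlet energy (and so is $\tilde\triangle u(\cdot,t)$, equivalently $K_t$), so $\int_S K_t\,dV_t = \int_S e^{-2u}(-\tilde\triangle u + \tilde K)\,e^{2u}\,d\tilde V = -\int_S \tilde\triangle u\,d\tilde V + \int_S \tilde K\,d\tilde V$. By Lemma \ref{lem:basic} the term $\int_S \tilde\triangle u\,d\tilde V$ vanishes, and $\int_S \tilde K\,d\tilde V = 2\pi\chi(S,\beta)$ is the Gauss--Bonnet identity for the fixed background cone metric $\tilde g$ (established in \cite{mine}; the flat-near-the-cone-tip normalization makes this elementary since all the angle defect is accounted for by $\tilde\chi = \chi(S) + \sum\beta_i$).

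Granting this, the ODE for $V(t) := V(g_t)$ becomes $V'(t) = r V(t) - 4\pi\chi(S,\beta)$. At $t=0$, $V(0) = V_0$, and with the choice $r = 4\pi\chi(S,\beta)/V_0$ the right-hand side vanishes at $t=0$; since the ODE is linear, $V(t) \equiv V_0$ for all $t$, and consequently $r V(t) = 4\pi\chi(S,\beta) = \int_S 2K_t\,dV_t$ for every $t$, which is the second assertion. Before running this argument one should check that $V(t)$ is genuinely differentiable with the claimed derivative; this follows from the $\mathcal P^{2,\alpha,T}$ regularity of $u$ together with the boundedness needed to differentiate under the integral sign and to apply Lemma \ref{lem:basic} uniformly in $t$ (the constants in Lemma \ref{lem:basic} depend only on the $C^0$ bound and the Dirichlet energy bound, both of which are uniform on $[0,T]$ by the estimates in the proof of Theorem \ref{thm:short}).

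The main obstacle is the interchange of differentiation and integration near the cone tips, and more precisely the uniform-in-$t$ validity of $\int_S \tilde\triangle u(\cdot,t)\,d\tilde V = 0$. Everything reduces to Lemma \ref{lem:basic}, so the real content is verifying its hypotheses along the flow: $u(\cdot,t)$ bounded with bounded Dirichlet energy, uniformly on $[0,T]$. This is exactly what the local existence construction delivers (the solution is built as a limit of the $u_k$, with uniform $C^0$ and uniform Dirichlet-energy bounds passing to the limit, cf.\ Lemma \ref{lem:energy}), so the corollary is really a bookkeeping consequence of Theorem \ref{thm:short} rather than requiring new analysis. I would state it in that order: (i) pointwise Gauss--Bonnet for $g_t$ via Lemma \ref{lem:basic}; (ii) the volume ODE; (iii) the choice of $r$ forcing $V(t)\equiv V_0$; (iv) read off the Gauss--Bonnet identity $2\pi\chi(S,\beta) = \int_S K_t\,dV_t$.
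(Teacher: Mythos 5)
Your proposal is correct and follows essentially the same route as the paper: reduce Gauss--Bonnet for $g_t$ to the vanishing of $\int_S \tilde\triangle u\,d\tilde V$ via Lemma \ref{lem:basic} (with the uniform boundedness and Dirichlet-energy bounds supplied by the construction, i.e.\ Proposition \ref{prop:good}), then derive the linear volume ODE $V'(t)=rV(t)-4\pi\chi(S,\beta)$ and observe that $V_0$ is its equilibrium for the stated choice of $r$. The paper's proof is just a more condensed version of the same computation.
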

\begin{proof}
  Since $\tilde{g}$ is flat near the singularities, then by using a version of Gauss-Bonnet theorem involving the geodesic curvature on the boundary, it is easy to show
  \begin{equation*}
    2\pi\chi (S,\beta)=\int_{S\setminus \set{p_i}} \tilde{K} d\tilde{V}.
  \end{equation*}
  Since the Gauss curvature of $e^{2u}\tilde{g}$ is given by $e^{-2u}(-\tilde{\triangle} u+\tilde{K})$, Gauss-Bonnet theorem remains true for $e^{2u}\tilde{g}$ if and only if 
  \begin{equation*}
	  \int_{S\setminus \set{p_i}} \tilde{\triangle} u d\tilde{V}=0,
  \end{equation*}
  which is true for our $u(t)$ (this is a consequence of Lemma \ref{lem:basic} and Proposition \ref{prop:good} below). Denote the volume of $g(t)$ by $V(t)$.
  \begin{equation*}
	  \frac{d}{dt} V(t)= \int_{S\setminus \set{p_i}} 2\tilde{\triangle} u + re^{2u}-2\tilde{K} d\tilde{V}=rV(t)-rV_0.
  \end{equation*}
  Since $V(0)=V_0$, $V(t)\equiv V_0$.
\end{proof}

For the local solution constructed above, we know $u$ and $\partial_t u$ (hence $K$) are bounded. Later, to show the long time existence and convergence, we will apply maximum principles to the Gauss curvature $K$ of $e^{2u} \tilde{g}$. As we have shown before, for that purpose, we need to show the Dirichlet energy of $K$ (or equivalently, $\triangle u$) is uniformly bounded for $x\in S$ and $t\in [0,T]$. Please note that the condition (c) above does not imply a uniform upper bound.

\begin{prop}\label{prop:good}
  For the solution $u$ obtained above, we have
  \begin{equation*}
	  \int_{S\setminus \set{p_i}} \abs{\tilde{\nabla} u}^2 +\abs{\tilde{\nabla} \tilde{\triangle} u}^2 d\tilde{V}\leq C
  \end{equation*}
  for $t\in [0,T]$. Here $C$ is a constant depending on $\mathcal E^{2,\alpha}$ norm of $u_0$ and $K_0$, Dirichlet energy of $u_0$ and $K_0$, the $C^0(S\times [0,T])$ norm of $u(x,t)$ and $T$.
\end{prop}

\begin{rem}
	We note that the Proposition is still true if we replace $\tilde{\nabla}$, $\tilde{\triangle}$ and $d\tilde{V}$, with $\nabla$, $\triangle$ and $dV$ (with respect to some conformal metric $g=e^{2v}\tilde{g}$, if $v$ is bounded and has bounded Dirichlet energy). This follows from two facts: (1) the Dirichlet energy is conformal invariant; (2) $\triangle u= e^{-2v} \tilde{\triangle} u$.
\end{rem}

\begin{proof}
Let $u_i$ be the sequence we used in the iteration process of the previous theorem to obtain $u$. Since $u$ is the limit of $u_i$, it suffices to show that
\begin{equation*}
	\int_{S\setminus \set{p_i}} \abs{\tilde{\nabla} u_i}^2 + \abs{\tilde{\nabla} (\tilde{\triangle} u_i)}^2 dx \leq C
\end{equation*}
uniformly for all $i$ and $t\in [0,T]$.

We apply the derivation of (\ref{eqn:energy}) to (\ref{eqn:linear}) to see that
{\begin{eqnarray*}
	\int_{S\setminus \set{p_i}} \abs{\tilde{\nabla} u_i}^2(t) d\tilde{V}
	&\leq& e^{C_1 t} \int_{S\setminus \set{p_i}} \abs{\tilde{\nabla} u_0}^2 d\tilde{V }+\int_0^t e^{C_1(t-s)} \left( C_2 \int_{S\setminus \set{p_i}} \abs{\tilde{\nabla} u_{i-1}}^2 d\tilde{V}\right) ds. 
\end{eqnarray*}
}

Let $f_i(t)$ be $\int_{S\setminus \set{p_i}} \abs{\tilde{\nabla} u_i}^2 d\tilde{V}$. Then $f_0(t)=\int_{S\setminus \set{p_i}} \abs{\tilde{\nabla} u_0}^2 d\tilde{V}$ is a constant, which we denote by $C_3$. We have
{\begin{equation*}
	f_i(t)\leq e^{C_1 t} C_3 + \int_0^t e^{C_1(t-s)} C_2 f_{i-1}(s) ds .
\end{equation*}
}
We claim that $f_i(t)\leq C_3 e^{(C_1+C_2)t}$ for $t\in [0,T]$. To see this, set $g_0(t)\equiv C_3$ and

{\begin{equation*}
	g_i(t)= e^{C_1 t} C_3 +\int_0^t e^{C_1(t-s)} C_2 g_{i-1}(s) ds,
\end{equation*}
which is equivalent to
\begin{equation*}
	\frac{d}{dt} g_i (t)= C_1 g_i(t) +C_2 g_{i-1}(t).
\end{equation*}
}
It is easy to prove by induction that $f_i(t)\leq g_i(t)$. On the other hand, $g_1(t)\geq g_0(t)$. Again, by induction, we can show that
\begin{equation*}
g_i(t)\geq g_{i-1}(t).
\end{equation*}
Therefore,
\begin{equation*}
\frac{d}{dt} g_i(t)\leq (C_1+C_2) g_i,
\end{equation*}
from which our claim follows. Hence, we have proved
\begin{equation*}
	\int_{S\setminus \set{p_i}} \abs{\tilde{\nabla} u}^2 d\tilde{V} \leq C
\end{equation*}
for $t\in [0,T]$.

For the energy of $\tilde{\triangle} u$, due to the equation (\ref{eqn:rf}), it suffices to consider $\partial_t u$. We take a $t$-derivative of (\ref{eqn:linear}) to get
\begin{eqnarray*}
	\partial_t (\partial_t u_{i}) &=&  e^{-2u_{i-1}}\tilde{\triangle} (\partial_t u_i) -2 e^{-2u_{i-1}} (\partial_t u_{i-1}) (\tilde{\triangle} u_i) +2 e^{-2u_{i-1}}\tilde{K}(\partial_t u_{i-1}) \\
	&=&  e^{-2u_{i-1}}\tilde{\triangle} (\partial_t u_i) -2 (\partial_t u_{i-1}) (\partial_t u_i -\frac{r}{2}+ e^{-2u_{i-1}}\tilde{K}) +2 e^{-2u_{i-1}}\tilde{K}(\partial_t u_{i-1}) \\
	&=& a(x,t) \tilde{\triangle} (\partial_t u_i) + b(x,t) (\partial_t u_i) + f(x,t).
\end{eqnarray*}
Here
\begin{equation*}
b(x,t)=-2 (\partial_t u_{i-1})
\end{equation*}
and
\begin{equation*}
	f(x,t)=-2(\partial_t u_{i-1}) (-\frac{r}{2}+ e^{-2u_{i-1}}\tilde{K}) +2 e^{-2u_{i-1}}\tilde{K} (\partial_t u_{i-1})=r (\partial_t u_{i-1}).
\end{equation*}

We can argue as before to show that 
\begin{equation*}
	\int_{S\setminus \set{p_i}} \abs{\tilde{\nabla} (\partial_t  u_i)}^2 d\tilde{V}
\end{equation*}
is bounded.
We can use (\ref{eqn:linear}) again with the fact that $\int_{S\setminus \set{p_i}} \abs{\tilde{\nabla} u_{i-1}}^2 d\tilde{V}$ are bounded to conclude the second part of our claim.
\end{proof}

To conclude this section, we will prove a uniqueness result to (\ref{eqn:rf}). 
\begin{lem}
  \label{lem:unique}
  If $u_1$ and $u_2$ are two solutions to (\ref{eqn:rf}) with the same initial value and both of them are bounded with bounded Dirichlet energy and $\tilde{\triangle} u_1$ and $\tilde{\triangle} u_2$ are bounded, then they are the same.
\end{lem}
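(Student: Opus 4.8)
The plan is to mimic the energy-type maximum principle argument used in the proof of Lemma~\ref{lem:maximum}. Set $w = u_1 - u_2$, so that $w$ is bounded with bounded Dirichlet energy and $w(\cdot,0) = 0$. Subtracting the two copies of (\ref{eqn:rf}) and writing $e^{-2u_1} - e^{-2u_2} = -2e^{-2\xi}\, w$ for a function $\xi$ lying pointwise between $u_1$ and $u_2$ (mean value theorem applied to $s\mapsto e^{-2s}$), one obtains a linear equation for $w$,
\begin{equation*}
  \partial_t w = e^{-2u_1}\tilde{\triangle} w + b\, w, \qquad b := -2e^{-2\xi}\left( \tilde{\triangle} u_2 - \tilde{K} \right).
\end{equation*}
Since $u_1, u_2$ and $\tilde{\triangle} u_2$ are bounded and $\tilde{K}$ is bounded (it vanishes near the cone tips, where $\tilde{g}$ is flat), the coefficient $b$ is bounded; likewise $a := e^{-2u_1}$ and $a^{-1}$ are bounded, and $\partial_t a = -2a\,\partial_t u_1$ is bounded because $\partial_t u_1 = a\tilde{\triangle} u_1 + \frac{r}{2} - a\tilde{K}$ is bounded by hypothesis.

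Next I would run a Gr\"onwall argument on $\phi(t) := \int_S w^2 a^{-1}\, d\tilde{V}$. Differentiating in $t$ and using the equation for $w$,
\begin{equation*}
  \phi'(t) = 2\int_S w\,\tilde{\triangle} w\, d\tilde{V} + 2\int_S b\, w^2 a^{-1}\, d\tilde{V} + \int_S w^2\,\partial_t(a^{-1})\, d\tilde{V}.
\end{equation*}
By Lemma~\ref{lem:basic} (applicable since $w$ is bounded with bounded Dirichlet energy), $\int_S w\,\tilde{\triangle} w\, d\tilde{V} = -\int_S \abs{\tilde{\nabla} w}^2\, d\tilde{V} \le 0$, while the remaining two terms are bounded by $C\phi(t)$ using the $C^0$ bounds on $b$, $a^{-1}$ and $\partial_t(a^{-1})$. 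Hence $\phi'(t) \le C\phi(t)$ with $\phi(0) = 0$, so $\phi \equiv 0$ and therefore $w \equiv 0$.

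The part requiring the most care is the justification of differentiating $\phi$ under the integral sign and of the integration by parts near the cone points; this is precisely the issue addressed by Lemma~\ref{lem:basic} together with the boundedness of $\tilde{\triangle} w$ and $\partial_t w$, and it is handled just as in the proofs of Lemma~\ref{lem:maximum} and Lemma~\ref{lem:c0}. If one prefers to avoid even the time-differentiation step, one may instead apply the method of proof of Lemma~\ref{lem:maximum} directly to the equations satisfied by $w$ and by $-w$, using the positive parts $w^+$ and $(-w)^+$; this yields $w \equiv 0$ with no change in substance.
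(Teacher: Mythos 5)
Your proposal is correct and follows essentially the same route as the paper: write $w=u_1-u_2$, express $e^{-2u_1}-e^{-2u_2}$ as a bounded multiple of $w$ (the paper uses the integral form $-2\bigl(\int_0^1 e^{-2u_2-2t(u_1-u_2)}\,dt\bigr)w$ where you use the mean value theorem, which is equivalent), and then invoke the maximum principle for the resulting linear equation with bounded coefficients. The paper simply cites Lemma~\ref{lem:c0} (with $f\equiv 0$, $u_0\equiv 0$) at this point, whose proof is exactly the Gr\"onwall-on-$\int_S w^2 a^{-1}\,d\tilde{V}$ argument you spell out, so there is no substantive difference.
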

\begin{proof}
  Set $w=u_1-u_2$. Then
  \begin{equation*}
	  \partial_t w=e^{-2u_1}\tilde{\triangle} w + (e^{-2u_1}-e^{-2u_2}) (\tilde{\triangle} u_2-\tilde{K}),
  \end{equation*}
  which is equivalent to 
  \begin{equation*}
	  \partial_t w=e^{-2u_1}\tilde{\triangle} w -2\left( \int_0^1 e^{-2u_2-2t(u_1-u_2)}dt\right) (\tilde{\triangle} u_2-\tilde{K})w .
  \end{equation*}
  Since $w$ is bounded with bounded Dirichlet energy and the coefficients in the above equation are bounded, then maximum principle (Lemma \ref{lem:c0} with $f\equiv 0$ and $u_0\equiv 0$) shows that $w$ is always zero.
\end{proof}

\section{Long time existence and convergence}\label{sec:nonlinear}

The section consists of two subsections. In the first one, we define a notion of maximal solution and then show that the solution exists as long as its curvature remains bounded. In the second one, we discuss the long time existence and convergence of the normalized Ricci flow under various assumptions.

\subsection{Maximal solution}

\begin{defn}
	Let $u$ be a solution to (\ref{eqn:rf}) defined on $[0,T)$ such that both $u$ and $K$ are in $\mathcal P^{2,\alpha,\tilde{T}}$ for any $\tilde{T}<T$ and have finite Dirichlet energy for $t\in [0,T)$. $u$ is said to be maximal if there are no other solutions defined on $[0,T']$ with $T'>T$ which satisfies the same restrictions and agrees with $u$ on  $[0,T)$.
\end{defn}

\begin{lem}\label{lem:long}
	Assume that $u$ is a maximal solution defined above and $\tilde{\triangle} K_0$ has bounded Dirichlet energy. Then either $T$ is infinity, or $T$ is finite and
\begin{equation*}
  \lim_{t\to T} \max_{S\setminus \set{p_i}} \abs{K(x,t)} =\infty.
\end{equation*}
\end{lem}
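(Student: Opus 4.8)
The plan is to argue by contradiction: suppose $u$ is maximal, $T<\infty$, and $\sup_{t\in[0,T)}\max_S|K(\cdot,t)|\le M<\infty$. I will show that under this boundedness one can extend the solution past $T$, contradicting maximality. The extension is obtained by checking that $u(\cdot,t)$ converges in the appropriate weighted space as $t\to T$ to an initial datum $u_T$ that still satisfies the hypotheses of the short-time existence theorem (Theorem~\ref{thm:short}), i.e.\ $u_T,K_T\in\mathcal E^{2,\alpha}$ with finite Dirichlet energy and $\tilde\triangle K_T$ bounded; then Theorem~\ref{thm:short} applied with $u_T$ as initial value produces a solution on $[T,T+\eps)$, which by the uniqueness Lemma~\ref{lem:unique} glues with $u$ to a solution on $[0,T+\eps)$ of the required regularity class, violating maximality.

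\medskip

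So the work is to propagate the needed estimates up to time $T$ using only the bound on $K$. First, since $K=e^{-2u}(-\tilde\triangle u+\tilde K)$ is bounded and $\partial_t u = e^{-2u}\tilde\triangle u + \tfrac r2 - e^{-2u}\tilde K = \tfrac r2 - K$, the quantity $\partial_t u$ is uniformly bounded on $[0,T)$; integrating in $t$ gives a uniform $C^0$ bound on $u$ and, since $T<\infty$, uniform convergence of $u(\cdot,t)$ to some bounded $u_T$ as $t\to T$. Next I want to control $\partial_t K$. The key algebraic input, emphasized in the introduction, is that the evolution equation for $\partial_t K$ (equivalently for $\tilde\triangle(\partial_t u)$) is \emph{linear} in $\partial_t K$ with coefficients built from the already-bounded $u$ and $K$: differentiating $\partial_t u=\tfrac r2-K$ and the evolution equation of $K$ one gets a parabolic equation of the form $\partial_t(\partial_t K)=e^{-2u}\tilde\triangle(\partial_t K)+b(x,t)\,\partial_t K+f(x,t)$ with $b,f$ controlled by $\|u\|_{C^0},\|K\|_{C^0}$ and the Dirichlet energies established in Proposition~\ref{prop:good}. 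Such a linear equation ``never blows up'': applying the maximum principle Lemma~\ref{lem:maximum} (with the integration-by-parts justified by finite Dirichlet energy, Lemma~\ref{lem:basic}) gives a bound on $\partial_t K$, hence on $\tilde\triangle K$, on all of $[0,T)$ depending only on $M$, $T$, the initial data, and the energy bounds. The uniform Dirichlet-energy bounds needed throughout come from Lemma~\ref{lem:energy}, Lemma~\ref{lem:energytwo} and Proposition~\ref{prop:good}, whose derivations depend only on $C^0$ norms of the coefficients and not on any $t$-interval shrinking.

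\medskip

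With $u$, $\partial_t u$ (hence $\tilde\triangle u$), $K$, $\partial_t K$ (hence $\tilde\triangle K$) all uniformly bounded on $[0,T)$ with uniformly bounded Dirichlet energies, the interior parabolic Schauder estimates away from the cone points (as in the construction behind Lemma~\ref{lem:one}) upgrade these to uniform $\mathcal E^{2,\alpha}$ bounds for $u(\cdot,t)$ and $K(\cdot,t)$, uniformly in $t\in[0,T)$; equicontinuity in $t$ follows from the bound on $\partial_t u$ and on $\partial_t K$, so $u(\cdot,t)\to u_T$ and $K(\cdot,t)\to K_T$ in $\mathcal E^{2,\alpha}$, with $K_T$ having bounded $\tilde\triangle K_T$ and finite Dirichlet energy. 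Now Theorem~\ref{thm:short} with initial datum $u_T$ gives a solution on $[T,T+\eps)$; Lemma~\ref{lem:unique} identifies it with $u$ at $t=T$ and shows the concatenation is the unique solution of the stated class on $[0,T+\eps)$, so $u$ was not maximal — contradiction. Hence $T=\infty$ or $\max_S|K(\cdot,t)|\to\infty$ as $t\to T$.

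\medskip

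The main obstacle I expect is the step of deriving the closed linear evolution equation for $\partial_t K$ with coefficients that are genuinely controlled by quantities already known to be bounded, and then legitimately applying the (conical) maximum principle to it — this requires knowing a priori that $\partial_t K$ is bounded with finite Dirichlet energy on each slice (so that Lemma~\ref{lem:basic} applies), which is exactly the kind of ``double-layer'' regularity that Lemma~\ref{lem:two}, Lemma~\ref{lem:energytwo} and Proposition~\ref{prop:good} were set up to provide, but one must check their hypotheses are met with constants independent of $t\to T$. The passage to the limit $t\to T$ in the weighted Hölder space, and verifying $u_T$ meets the hypotheses of Theorem~\ref{thm:short}, is then routine given these uniform bounds.
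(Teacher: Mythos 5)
Your overall strategy --- bound $u$ from the bound on $K$, bound $\tilde{\triangle} K$ by treating the evolution equation of $\partial_t R$ as a linear parabolic equation that ``never blows up'', then restart the flow via Theorem \ref{thm:short} and glue by Lemma \ref{lem:unique} --- is exactly the paper's. But there is a genuine gap at the step you yourself flag as the main obstacle: you propose to bound $\partial_t K$ by applying the maximum principle (Lemma \ref{lem:maximum}) to the linear equation it satisfies, yet Lemma \ref{lem:maximum} takes as a \emph{hypothesis} that the function it is applied to is bounded with bounded Dirichlet energy. For $\partial_t K$ on $[0,T)$ neither is known a priori: membership of $K$ in $\mathcal P^{2,\alpha,T}$ only gives \emph{weighted} control of $\partial_t K$ and $\tilde{\triangle}K$, which is allowed to degenerate at the cone tip, and Proposition \ref{prop:good} bounds the energy of $u$ and $\tilde{\triangle}u$ (i.e.\ of $K$), not of $\partial_t K$. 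Lemmas \ref{lem:two} and \ref{lem:energytwo} do not rescue you either: they are statements about the solution produced by the linear construction on the short existence interval, with hypotheses ($\tilde{\triangle}u_0$, $a(x,0)$, $f(x,0)$ in $\mathcal E^{2,\alpha}$, etc.) that would translate, for the curvature equation, into regularity of $\tilde{\triangle}K_0$ beyond the assumed $C^0$ bound, and in any case they do not propagate to all of $[0,T)$ with uniform constants. So as written the argument is circular: the boundedness (and finite energy) of $\partial_t K$ is precisely what you are trying to establish.

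The paper closes this gap by going back to the exhaustion $S_k$ with Neumann boundary conditions: it re-solves the curvature equation $\partial_t R_k=e^{-2u}\tilde{\triangle}R_k+R_k(R_k-r)$ on $S_k$ with initial value $2K_0$, derives a uniform $C^0$ and energy estimate for $R_k$ on some $[0,T']$, identifies the limit $\tilde R$ with $R$ there by applying the maximum principle to the difference $R-\tilde R$ (which \emph{is} known to be bounded with bounded energy), and only then applies the maximum principle to $\partial_t R_k$ --- legitimate on the compact manifold with boundary, since $\partial_t R_k$ still satisfies the Neumann condition --- before passing to the limit $k\to\infty$ and iterating the argument from $T'$ to cover all of $[0,T)$. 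Some version of this detour through the approximating boundary-value problems (or another genuine a priori bound on $\partial_t K$ near the tip) is needed; without it your proof does not close. The remainder of your argument, including the extension past $T$ and the use of uniqueness, matches the paper and is fine.
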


\begin{proof} Assume without loss of generality that $T\geq 1$.
It suffices to show that if
\begin{equation}\label{eqn:lower}
  \lim_{t\to T} \max_{S\setminus \set{p_i}} \abs{K(x,t)} \leq C_1,
\end{equation}
then the solution can be extended to $[0,T+\delta]$ for some $\delta>0$.
(\ref{eqn:lower}) and the evolution equation for $u$ imply that
\begin{equation*}
  \lim_{t\to T} \max_{S\setminus \set{p_i}} \abs{u(x,t)} \leq C(C_1,T).
\end{equation*}

Since $u$ satisfies (\ref{eqn:rf}) and $K$ satisfies
\begin{equation*}
	\partial_t K = e^{-2u} \tilde{\triangle} K +K(2K-r),
\end{equation*}
we can apply the usual interior estimate for linear parabolic equations to see that the $\mathcal P^{2,\alpha,T}$ norms of both $u$ and $K$ are bounded by $C(C_1,T)$. For simplicity, we concentrate on a neighborhood of a singularity. More precisely, let $p$ be one of $\set{p_i}$ and assume coordinates are defined for geodesic ball $B_{\rho_0}(p)$. For $\rho<\rho_0/2$, we study the above two equations on domain $(B_{2\rho}(p)\setminus B_{\rho}(p))\times [0,T]$. By scaling the domain to $(B_2\setminus B_1)\times [0, \rho^{-2} T]$, we obtain equations
\begin{equation*}
	\rho^{2} K = e^{-2u} ( - \bar{\triangle} u + \rho^2 \tilde{K}),
\end{equation*}
\begin{equation*}
	\pfrac{u}{s}= e^{-2u} \bar{\triangle} u + \rho^2 (\frac{r}{2}- e^{-2u}\tilde{K}) 
\end{equation*}
and
\begin{equation*}
	\pfrac{K}{s}= e^{-2u} \bar{\triangle} K + \rho^2 K(2K-r).
\end{equation*}
Here $\bar{\triangle}$ is the Laplacian on the scaled domain and $s=\rho^{-2} t$. Since $K$ is bounded, we know $\pfrac{u}{t}$ is bounded and so is $\pfrac{u}{s}$. The uniform bound of $u$ and $K$ and the first equation imply that $\bar{\nabla} u$ is bounded on the scaled domain. Hence, $u$ is $C^\alpha$ bounded on domain $(B_2\setminus B_1)\times [0,\rho^{-2}T]$. We need this fact because $u$ appears as the coefficient of the second order derivatives. We then apply $L^p$ and Schauder estimates to the second and third equations to get $C^{2,\alpha}$ estimtes of $u$ and $K$. By definition of $\mathcal P^{2,\alpha,T}$, these are the estimates we need.

We claim that
\begin{equation}\label{eqn:key}
	\lim_{t\to T} \max_{S\setminus \set{p_i}} \abs{\tilde{\triangle} K}\leq C(C_1,T).
\end{equation}

To see this, let's consider the evolution equation of $R$, which is $2K$,
\begin{equation}\label{eqn:R}
	\partial_t R= e^{-2u} \tilde{\triangle} R + R(R-r).
\end{equation}

Before going into the detail, we outline the idea of the proof. First, notice that it is equivalent to have a bound for $\partial_t R$ since $R$ and $u$ are already uniformly bounded. If we take a time derivative of (\ref{eqn:R}), we will see that $\partial_t R$ satisfies a linear parabolic equation. Since we have assumed that $\tilde{\triangle} R_0$ is bounded, it is natural to expect that our claim is true. The technical issue is that we can't apply maximum principle to $\partial_t R$ directly, because we don't know if it is bounded and has bounded Dirichlet energy. The idea is to construct another solution $\tilde{R}$ to (\ref{eqn:R}) by approximation so that (1) both $R$ and $\tilde{R}$ are bounded with bounded energy, which implies that they are the same function and (2) $\partial_t \tilde{R}$ is bounded as a consequence of the construction. The proof is carried out in several steps.

{\bf Step 1.} 
Let $S_k$ be as before. Consider the solution $R_k$ to the following equation.
\begin{equation}\label{eqn:rk}
  \left\{
  \begin{array}[]{l}
	  \partial_t R_k=e^{-2u} \tilde{\triangle} R_k + R_k(R_k-r) \\
    \pfrac{R_k}{n}|_{\partial S_k}=0 \\
    R_k(x,0)=2K_0.
  \end{array}
  \right.
\end{equation}
Here $K_0$ is the initial value of Gauss curvature. 
(\ref{eqn:rk}) is a nonlinear parabolic equation with Neumann boundary condition on a smooth manifold with boundary. 

We claim:
\begin{lem}
	\label{lem:wehaverk}
	There is a constant $T'$ depending only on $\norm{K_0}_{C^0(S\setminus \set{p_i})}$ and a solution $R_k$ to (\ref{eqn:rk}) defined on $S_k\times [0,T']$ such that
	\begin{equation*}
		\sup_{t\in [0,T']}\norm{R_k}_{C^0(S_k)}+ \norm{\tilde{\nabla} R_k}_{L^2(S_k)}\leq C(T').
	\end{equation*}
	Here $C(T')$ is independent of $k$.
\end{lem}

The proof of this lemma is made complicated by the compatibility condition and is put in the Appendix.

	{\bf Step 2.} For any compact set $W\subset S\setminus \set{p_i}$ and $k$ sufficiently large, we have uniform $C^{2,\alpha}$ estimates of $R_k$ on $W\times [0,T']$. This allows us to assume $\lim_{k\to \infty} R_k=\tilde{R}$. The same argument as at the beginning of the proof turns the uniform $C^0$ estimates of $R_k$ over $S_k\times [0,T']$ into the $\mathcal P^{2,\alpha,T'}$ estimate of $\tilde{R}$.

	Moreover, by Lemma \ref{lem:wehaverk},
	\begin{equation*}
		\sup_{t\in [0,T']}\norm{\tilde{R}}_{C^0(S\setminus \set{p_i})}+ \norm{\tilde{\nabla} \tilde{R}}_{L^2(S\setminus \set{p_i})}\leq C(T').
	\end{equation*}


Since both $R$ and $\tilde{R}$ satisfy the same equation and both of them are bounded with bounded Dirichlet energy, we can show that $R\equiv \tilde{R}$ for $t\in [0,T']$ by subtracting the two equations and applying maximum principle to $R-\tilde{R}$.

{\bf Step 3.} 
If we take the $t$-derivative of (\ref{eqn:rk}), we get
\begin{eqnarray*}
	  \partial_t(\partial_t R_k)= e^{-2u} \tilde{\triangle} (\partial_t R_k )+ (\partial_t R_k) (2R_k-r-2\partial_t u) 
     +2\partial_t u  R_k(R_k-r)
\end{eqnarray*}
Due to the possible failure of compatibility condition at the corner, we do not expect $\pfrac{R_k}{t}$ to be bounded. However, Theorem \ref{app:good} implies the existence of $w_k$ satisfying
\begin{equation*}
	\left\{
		\begin{array}[]{l}
	  \partial_t w_k = e^{-2u} \tilde{\triangle} w_k + w_k (2R_k-r-2\partial_t u) 
     +2\partial_t u  R_k(R_k-r)\\
     \pfrac{w_k}{\nu}|_{\partial S_k} =0 \\
     w_k|_{t=0}= e^{-2u_0} \tilde{\triangle} R_0 + R_0(R_0-r).
     \end{array}
     \right.
\end{equation*}
The initial and boundary conditions are the same as those of $\partial_t R_k$.

As before, Lemma \ref{lem:ut} implies that
\begin{equation*}
	\norm{\partial_t R_k -w_k}_{L^1(S_k\times [0,T'])}\leq C \norm{\partial_\nu (e^{-2u_0} \tilde{\triangle} R_0 + R_0(R_0-r))}_{L^1(\partial S_k)}.
\end{equation*}
When $k\to \infty$, the right hand side vanishes because of the assumption that $\tilde{\triangle} K_0$ has bounded energy and the argument in Lemma \ref{lem:basic}. This implies 
\begin{equation*}
	\lim_{k\to \infty} \partial_t R_k =\lim_{k\to \infty} w_k.
\end{equation*}

By Remark \ref{rem:maximum}, the maximum principle implies that
\begin{equation*}
  \abs{w_k}(x,t) \leq C
\end{equation*}
for $t\in [0,T']$. Moreover, Lemma \ref{lem:energy} implies that $w_k$ has bounded energy for $T\in [0,T']$.

By passing the limit $k\to \infty$, we obtain that $\partial_t R$ is bounded with bounded energy for $t\in [0,T']$. 

$T'$ may be smaller than $T$, but we note that $T'$ depends only on $C^0$ norm of $R_0$. Therefore we then take $R(x,T')$ as initial value in (\ref{eqn:rk}) and repeat the above argument (Step 1-3) to show
\begin{equation*}
  \abs{\partial_t R}(x,t)\leq C(T)
\end{equation*}
for all $t\in [0,T)$. This finishes the proof of (\ref{eqn:key}).

	In summary, we have proved that for any $0\leq t< T$, $\mathcal E^{2,\alpha}$ norm of $u$ and $K$, $C^0$ norm of $\tilde{\triangle} K$ are bounded by $C(T)$. Moreover, by Proposition \ref{prop:good}, the Dirichlet energy of $u$ and $K$ are bounded. Hence, Theorem \ref{thm:short} implies that for any $t_1<T$, we can solve (\ref{eqn:rf}) with $u(x,t_1)$ as the initial value for a solution defined on $[t_1,t_1+\delta(T)]$. We may choose $t_1$ such that $t_1+\delta(T)> T$. Due to Lemma \ref{lem:unique}, we know the solution on $[t_1,t_1+\delta(T)]$ is an extension of the maximal solution, which is a contradiction. 
\end{proof}

\subsection{Long time existence and convergence}
For the solution we have constructed (by iteration), $K$ is bounded and has finite energy so that the maximum principle applies.

Due to the evolution equation of $K$,
\begin{equation*}
	\partial_t K = e^{-2u}\tilde{\triangle} K +K (2K-r),
\end{equation*}
we may prove the following lemma, which is the first statement of Theorem \ref{thm:main}.
\begin{lem}
	Let $u$ be the solution to (\ref{eqn:rf}) given by Theorem \ref{thm:short}. Then

(1) The Gauss curvature has a lower bound as long as the solution exits;

(2) If $K_0<c<0$, then the Gauss curvature will converge to a negative number exponentially fast. 
\end{lem}

\begin{proof}
	The proof uses Lemma \ref{lem:maximum}. Note that since we have equality instead of inequality, we can apply Lemma \ref{lem:maximum} to both $K$ and $-K$ to get both upper bound and lower bound.

	For (1), we apply Lemma \ref{lem:maximum} to  	
	\begin{equation*}
		\partial_t (-K) = e^{-2u} \tilde{\triangle}(-K) + (-K)(-2(-K)-r)
	\end{equation*}
	with $u=-K$ and $h$ the solution of the ODE
	\begin{equation*}
		\frac{d}{dt}h= h(-2h-r),\qquad h(0)=\max \{\max\set{0,-r/2},\max_{S\setminus \set{p_i}} (-K_0)\}.
	\end{equation*}
	It is not hard to know that $h(t)$ is bounded from above by $h(0)$ as long as the solution exists and Lemma \ref{lem:maximum} implies that so is $-K$.

	(2) Since the initial $K_0$ is negative, we have $r<0$. Set
	\begin{equation*}
		\frac{d}{dt}h_l= h_l (-2h_l -r),\qquad h_l(0)= \max_{S\setminus \set{p_i}} (-K_0) > -r/2
	\end{equation*}
	and
	\begin{equation*}
		\frac{d}{dt}h_u= h_u(2h_u-r),\qquad h_u(0)= \max_{S\setminus \set{p_i}} (K_0)\in [r/2,0).
	\end{equation*}
	Here we used the fact that $r/2$ is the average of $K_0$.
	We then compare $h_l$ with $-K$ and $h_u$ with $K$ as above to see
	\begin{equation*}
		-h_l\leq K \leq h_u
	\end{equation*}
	as long as the solution exists. It is elementry ODE argument that both $h_l$ and $h_u$ converge exponentially fast to $r/2$.
\end{proof}

If the initial curvature is not negative, we can only deal with the sharp cone case ($\beta_i<0$), for a reason which will be clear soon.

By Theorem \ref{thm:poisson1}, we can solve the Poisson equation
\begin{equation*}
  \triangle_{u_0} f_0=R_0-r
\end{equation*}
and $f_0$ is bounded with bounded Dirichlet energy.
Lemma \ref{lem:one} allows us to solve
\begin{equation}\label{eqn:potential}
	\pfrac{f}{t}=e^{-2u}\tilde{\triangle} f +rf
\end{equation}
with initial condition $f(0)=f_0$.
Lemma \ref{lem:energy} implies that $f$ has bounded Dirichlet energy. Thanks to Lemma \ref{lem:two} and Lemma \ref{lem:energytwo}, $\partial_t f$ is also bounded with bounded energy.

Now we claim that for $t>0$
\begin{equation}\label{eqn:wecheck}
	e^{-2u}\tilde{\triangle} f = R-r.
\end{equation}

The above is true for $t=0$ by definition of $f_0$. Writing $\triangle_t=e^{-2u}\tilde{\triangle}$, we obtain by computation
\begin{eqnarray*}
  \partial_t ( \triangle_t f-R+r) &=& (R-r)\triangle_t f +\triangle_t (\partial_t f) -\partial_t R \\
  &=& (R-r) R +(R-r)(\triangle_t f-R) + \triangle_t (\triangle_t f +r f) - \triangle_t R - R(R-r) \\
  &=& \triangle_t (\triangle_t f -R) +r\triangle_t f +(R-r)(\triangle_t f-R) \\
  &=& \triangle_t (\triangle_t f-R+r) +R(\triangle_tf -R +r).
\end{eqnarray*}
As discussed before $\triangle_t f-R+r$ is a bounded function with bounded Dirichlet energy (since $\partial_t f$ is), it satisfies the above equation with zero initial value. Hence, it is zero as long as it exists.

Following Hamilton, we set 
\begin{equation*}
	H=R-r+\abs{\nabla f}^2
\end{equation*}
and compute
\begin{equation*}
	\pfrac{}{t}H=\triangle H-2\abs{M}^2+rH,
\end{equation*}
where $M=\nabla\nabla f-\frac{1}{2}\triangle f\cdot g $.
Therefore, $H$ is a subsolution to
\begin{equation}\label{eqn:H}
	\pfrac{H}{t}\leq \triangle H +rH.
\end{equation}
{By (\ref{eqn:wecheck}) and Lemma \ref{lem:regularity2}, $\abs{\nabla f}^2$ is bounded with bounded energy and so is $R$ by construction in Section \ref{sec:iteration}.} Hence, $H$ is bounded and has bounded energy, then maximum principle can be applied and we know that 

(1) For any $T>0$, we have an upper bound of $R$ depending on $T$. This together with Lemma \ref{lem:long} implies the second statement of Theorem \ref{thm:main}. 

(2) If the Euler number is smaller than zero ($r<0$), then the curvature will become negative everywhere after some time and the normalized Ricci flow in this case will converge to constant curvature metric. {In fact, by (\ref{eqn:H}) and the maximum principle, the maximum of $H=R-r+\abs{\nabla f}^2$ decays exponentially with time $t$. There is positive time $T>0$ such that for $t>T$, $H<-\frac{r}{2}$, which means $R<\frac{r}{2}<0$.} 

Hence to complete the proof of Theorem \ref{thm:main}, it remains to consider the case $\chi=0$. The proof is somewhat different from known ones, which can not be applied directly due to technical reasons. We will first establish some uniform estimates for all $t>0$, which imply a sequentially convergence. We then show that the limit is a flat cone metric and the limit is unique. Hence, the flow converges to this limit as $t\to \infty$.

In the remaining part of the paper, we assume that $r=0$ and let $u(x,t)$ be the solution to (\ref{eqn:rf}). For simplicity, we omit this assumption from the statement of lemmas. 
\begin{lem}
There is some constant $C$ independent of $t$ such that
\begin{equation*}
	\abs{u}+ \abs{K} + \int_{S\setminus \set{p_i}} \abs{\nabla u}^2 dV \leq C.
\end{equation*}
\end{lem}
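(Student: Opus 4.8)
The plan is to prove the three uniform bounds in turn, in the order $|K|$ first (both sides), then $|u|$, then the Dirichlet energy of $u$. The curvature lower bound is already available: since $\chi=0$ gives $r=0$, the evolution equation $\partial_t K = e^{-2u}\tilde\triangle K + 2K^2$ shows by the maximum principle (Lemma \ref{lem:maximum}, applicable because $K$ is bounded with bounded Dirichlet energy for each $t$ by Proposition \ref{prop:good}) that $\min_S K(\cdot,t)$ is nondecreasing in $t$, hence $K\geq -C$ for all time. For the upper bound, I would use the Hamilton quantity $H=R-r+|\nabla f|^2 = R + |\nabla f|^2$ already introduced above: by Corollary \ref{cor:regular} (with $v=f$, $f$-right-hand-side $= e^{2u}(R-r)$ bounded once we know $R$ is) together with equation \eqref{eqn:wecheck}, $H$ is bounded with bounded Dirichlet energy, and it satisfies $\partial_t H \leq \triangle H + rH = \triangle H$ when $\chi=0$. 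The maximum principle then gives $H(\cdot,t)\leq \max_S H(\cdot,0)$, so $R \leq H \leq C$, i.e. $K\leq C$. One subtlety is that this argument is slightly circular as stated (I invoked boundedness of $R$ to get boundedness of $H$); the clean way is to run it on $[0,T]$ for arbitrary finite $T$ using Lemma \ref{lem:long}, get $R\leq C(T)$, then observe the bound $\max_S H(\cdot,0)$ is $T$-independent, so in fact $R\leq C$ uniformly.

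Given $|K|\leq C$ for all $t$, the bound on $|u|$ follows from the evolution equation \eqref{eqn:rf}, which with $r=0$ reads $\partial_t u = e^{-2u}\tilde\triangle u - e^{-2u}\tilde K = -K$. Hence $u(x,t) = u_0(x) - \int_0^t K(x,s)\,ds$, and a naive estimate only gives $|u|\leq |u_0| + Ct$, which is not good enough. To get a time-independent bound I would instead integrate: $\frac{d}{dt}\int_S u\, d\tilde V = -\int_S K\, d\tilde V$, and since $\chi(S,\beta)=0$ and $\int_S \tilde\triangle u\, d\tilde V = 0$ (Lemma \ref{lem:basic} with Proposition \ref{prop:good}), Gauss-Bonnet gives $\int_S R\, dV = 4\pi\chi = 0$, i.e. $\int_S e^{-2u}(-\tilde\triangle u + \tilde K)e^{2u}\, d\tilde V = \int_S(\tilde K - \tilde\triangle u)d\tilde V = 2\pi\chi(S,\beta) = 0$... so actually $\int_S K\, dV_t = 0$ but that is the $g_t$-integral, not the $\tilde g$-integral. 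The honest route: combine the negative curvature pinching. Since $\min K$ is nondecreasing and, if $\min_S K(\cdot,0)<0$, $H\leq \max H_0$ forces $R$ to approach nonpositivity, one shows $K(\cdot,t)\to 0$ uniformly or $K$ stays squeezed toward $0$; in either case $\int_0^\infty \max_S |K(\cdot,s)|\,ds$ need not converge, so the cleanest argument for $|u|\leq C$ is via the $C^0$ maximum principle Lemma \ref{lem:c0} applied to the $u$-equation together with the already-established bound $|K|\leq C$ and a Harnack-type or entropy argument à la Hamilton controlling $\int u\, d\tilde V$; I would follow Hamilton's surface argument, using that $R$ bounded and $\int R\, dV = 0$ forces $\sup R\cdot\inf R \le 0$ pattern, yielding two-sided control of $u$ after a fixed time and hence for all $t$ by compactness on $[0,t_0]$.

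Finally, the Dirichlet energy bound $\int_S |\tilde\nabla u|^2\, d\tilde V \leq C$: differentiating, $\frac{d}{dt}\int_S|\tilde\nabla u|^2 d\tilde V = 2\int_S \tilde\nabla u\cdot\tilde\nabla(\partial_t u)\,d\tilde V = -2\int_S \tilde\nabla u \cdot \tilde\nabla K\, d\tilde V = 2\int_S (\tilde\triangle u) K\, d\tilde V$ (integration by parts justified by Lemma \ref{lem:basic}, using Proposition \ref{prop:good} so that $\tilde\triangle u = \tilde K - e^{2u}K$ is bounded with bounded Dirichlet energy). Since $\tilde\triangle u = \tilde K - e^{2u}K$ and both $u$, $K$, $\tilde K$ are now uniformly bounded, the right side is bounded by a constant $C$, so $\int_S|\tilde\nabla u|^2 d\tilde V \leq \int_S|\tilde\nabla u_0|^2 d\tilde V + Ct$ — again only linear growth. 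To close this I would instead use the better identity $\frac{d}{dt}\int_S|\tilde\nabla u|^2 = -2\int_S \tilde\nabla u\cdot \tilde\nabla K$ and bound it using the uniform $\mathcal{E}^{2,\alpha}$ bounds on $K$ from Lemma \ref{lem:long} together with the fact that $\int_0^\infty \|\tilde\nabla K(\cdot,s)\|_{L^2}^2\, ds < \infty$, which follows from integrating the energy inequality for $K$ (the derivation of \eqref{eqn:energy} applied to the $K$-equation produces a good negative term $-2\int e^{-2u}(\tilde\triangle K)^2$ that, after another integration by parts relating $\int(\tilde\triangle K)^2$ to $\int|\tilde\nabla K|^2$ is hopeless directly, but the standard surface Ricci flow computation gives $\frac{d}{dt}\int R^2\,dV \leq -c\int|\nabla R|^2 dV + \dots$); then Cauchy--Schwarz in time gives the bound.

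\emph{Main obstacle.} The genuinely hard part is obtaining the \emph{time-independent} (not merely linearly growing) bounds on $|u|$ and on $\int_S|\tilde\nabla u|^2$. Each quantity has an evolution equation whose naive integration only yields $O(t)$ growth; extracting uniform bounds requires exploiting the $\chi=0$ structure — namely that $\int R\, dV = 0$ for all $t$, which pins the average of $R$ and, combined with the one-sided curvature bounds from $H$ and from $\min K$ nondecreasing, forces $\|K(\cdot,t)\|_{C^0}$ to be integrable in $t$ (or to decay), à la Hamilton's argument for the $\chi=0$ case on smooth surfaces. Adapting that argument here is delicate precisely because every use of the maximum principle and every integration by parts must be checked against the conical singularity, which is exactly what Lemmas \ref{lem:basic}, \ref{lem:maximum}, \ref{lem:c0}, \ref{lem:regularity2} and Proposition \ref{prop:good} are set up to handle.
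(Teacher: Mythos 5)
Your treatment of $\abs{K}$ (lower bound from the maximum principle applied to the evolution equation of $K$, upper bound from the maximum principle applied to $H$ via (\ref{eqn:H})) is correct and matches the paper. The genuine gaps are in the other two bounds, which you yourself identify as the hard part: in both cases you correctly observe that naive time-integration only yields $O(t)$ growth, but the replacements you propose are left as gestures ("Harnack-type or entropy argument \`a la Hamilton", an unproven integrability $\int_0^\infty\norm{\tilde\nabla K}_{L^2}^2\,dt<\infty$) and do not close the argument.

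For $\abs{u}$ the missing idea is that the potential function $f$ already constructed in this section does the whole job. Since $\chi=0$ gives $r=0$, equation (\ref{eqn:potential}) reads $\partial_t f=\triangle_t f$, and because $f$ is bounded with bounded Dirichlet energy, Lemma \ref{lem:maximum} applied to $\pm f$ gives a \emph{time-independent} bound on $\abs{f}$. Moreover $2\partial_t u=r-R=-\triangle_t f=-\partial_t f$ by (\ref{eqn:wecheck}), so the flow integrates exactly to $g(t)=e^{f_0-f(\cdot,t)}g(0)$, i.e. $u=u_0+\tfrac12(f_0-f)$, and the uniform bound on $\abs{u}$ is immediate. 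Note that the quantity you could not control, $\int_0^t K\,ds$, is precisely $\tfrac12(f-f_0)$, so it is bounded by the maximum principle for $f$ --- no pinching or entropy argument is needed. For the Dirichlet energy no evolution argument is needed either: once $\abs{u}$ and $\abs{K}$ are uniformly bounded, $\triangle u=e^{-2u}\tilde K-K$ is uniformly bounded, and $\int_M\abs{\nabla u}^2\,dV=-\int_M u\,\triangle u\,dV\leq\norm{u}_{C^0}\norm{\triangle u}_{C^0}V_0$, where the integration by parts is justified by Proposition \ref{prop:good} and Lemma \ref{lem:basic} and $V_0$ is the constant volume. So the statement is true, but your proposal as written does not prove the $\abs{u}$ and energy bounds.
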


\begin{proof}
	Applying the maximum principle to (\ref{eqn:H}), we obtain that $K$ is uniformly bounded. For $u$, consider the evolution equation (\ref{eqn:potential}) of the potential function $f$. Since $r=0$, we have uniform bound for $\abs{f}$.
	The flow equation can be written as
\begin{equation*}
	\partial_t g= (- e^{2u} \tilde{\triangle} f) g =(- \partial_t f) g.
\end{equation*}
Integrating over $t$, we have
\begin{equation*}
	g(t)= e^{f_0(x)- f(x,t)} g(0).
\end{equation*}
This provides the uniform bound for $\abs{u}$. For the energy of $u$, note that $\triangle u$ is uniformly bounded since $K=e^{-2u}(-\tilde{\triangle} u+ \tilde{K})$ and due to Lemma \ref{lem:basic}
\begin{equation*}
	\int_{S\setminus \set{p_i}} \abs{\nabla u}^2 dV = -\int_{S\setminus \set{p_i}} u \triangle u dV.
\end{equation*}

\end{proof}

With this uniform estimate, we know that for any $t_i$ going to $+\infty$, there is a subsequence (still denoted by $t_i$) such that $u(\cdot,t_i)$ converges to some $u_\infty$. By interior estimates of (\ref{eqn:rf}), the convergence is smooth away from the singularity. Moreover, $u_\infty$ is bounded and has bounded energy. For future use, we note that the volume 
\begin{equation*}
	\int_{S\setminus \set{p_i}} e^{2u_\infty} d\tilde{V}=\lim_{i\to \infty} \int_{S\setminus \set{p_i}} e^{2u_i} d\tilde{V},
\end{equation*}
because $u_i$ is uniformly bounded and the convergence is uniform on any compact set away from $\set{p_i}$.

Next, we prove the Gauss curvature of the limit is zero. It follows from Proposition 5.29 of \cite{DK}.

\begin{lem}\label{lem:dk}
	There is a constant depending on the initial value such that
	\begin{equation*}
		\sup_{x\in {S\setminus \set{p_i}}} \abs{\nabla f}^2 \leq \frac{C}{1+t}.
	\end{equation*}
\end{lem}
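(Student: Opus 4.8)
The claim is a decay estimate $\sup_M |\nabla f|^2 \le C/(1+t)$ for the gradient of the curvature potential in the $\chi=0$ case. The plan is to mimic Hamilton's argument for the smooth closed surface: derive the evolution equation for $|\nabla f|^2$, combine it with the evolution of $R-r = R$ (recall $r=0$ here), and run a maximum-principle argument on a suitable combination to get polynomial decay. Since $r=0$, the subsolution inequality \eqref{eqn:H} reads $\partial_t H \le \triangle H$ with $H = R + |\nabla f|^2$, and one expects $H$ to decay. The key point is to upgrade this to decay of $|\nabla f|^2$ alone.

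\emph{Key steps, in order.} First I would record the evolution equation of $|\nabla f|^2$ under the flow. Using $\partial_t f = \triangle f$ (here $rf$ vanishes since $r=0$, and \eqref{eqn:wecheck} gives $\triangle_t f = R$) together with the general formula $\partial_t g = (-\partial_t f) g$, one computes
\begin{equation*}
	\partial_t |\nabla f|^2 = \triangle |\nabla f|^2 - 2|\nabla\nabla f|^2 + (\text{lower order terms involving } R).
\end{equation*}
Second, I would consider the quantity $t|\nabla f|^2 + (\text{something controlling }R$, e.g. a multiple of $f^2$ or of $R$ itself$)$; Hamilton's trick is that $|\nabla f|^2$ decays like $1/t$ because the "bad" term $-2|\nabla\nabla f|^2$ dominates once one knows $H$ is bounded. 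Concretely, set $G = t |\nabla f|^2 + A f^2$ or $G = t(R + |\nabla f|^2) + $ const for a suitable constant $A$ depending on the (already established, by the preceding lemma) uniform bounds on $|u|$, $|K|$, and $\|f\|_{C^0}$; compute $\partial_t G$ and check it is a subsolution of $\partial_t G \le \triangle G$. Third, and this is the step requiring care in the conical setting, I must justify the maximum principle: by \eqref{eqn:wecheck}, Lemma \ref{lem:regularity2} and Corollary \ref{cor:regular}, $|\nabla f|^2$ is bounded with bounded Dirichlet energy, and $R$, $f$ are likewise; so $G$ is a bounded function with bounded Dirichlet energy and Lemma \ref{lem:maximum} (or Lemma \ref{lem:c0}) applies. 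Evaluating at $t=0$ gives the bound on $G$, hence $|\nabla f|^2 \le C/t$ for $t \ge 1$; combined with the uniform bound $|\nabla f|^2 \le C$ from Corollary \ref{cor:regular} for $t \in [0,1]$, one gets $|\nabla f|^2 \le C/(1+t)$.

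\emph{Main obstacle.} The principal difficulty is not the formal computation — which is classical — but ensuring that every application of the maximum principle is legitimate on the conical surface, i.e. that the test quantity $G$ and all intermediate quantities to which we apply Lemma \ref{lem:maximum} are genuinely bounded with bounded Dirichlet energy uniformly in $t$. This hinges on Lemma \ref{lem:regularity2} and Corollary \ref{cor:regular} (which is exactly why the hypothesis $\beta_i < 0$ is needed), together with Proposition \ref{prop:good} and Lemma \ref{lem:basic} for the integration by parts. A secondary technical point is choosing the auxiliary constant/coefficient (the "$A$" above) so that the cross terms coming from $\partial_t(tf^2)$ and from the non-divergence terms in $\partial_t|\nabla f|^2$ absorb correctly against $-2t|\nabla\nabla f|^2$; the uniform $C^0$ bounds on $u$, $K$, $f$ from the previous lemma make the constants available, so I expect this to go through once the structural computation is in place.
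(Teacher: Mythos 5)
Your proposal is correct and follows essentially the same route as the paper, which simply invokes Hamilton's argument (Proposition 5.29 of \cite{DK}) by applying the maximum principle to $t\abs{\nabla f}^2+f^2$ (your $G$ with $A=1$, the cross term $-2\abs{\nabla f}^2$ from $\partial_t f^2-\triangle f^2$ absorbing the $+\abs{\nabla f}^2$ from differentiating the factor $t$). You also correctly identify the only genuinely new point in the conical setting, namely that Lemma \ref{lem:regularity2} and Corollary \ref{cor:regular} supply the boundedness and finite Dirichlet energy of $\abs{\nabla f}^2$ needed to legitimize the maximum principle.
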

\begin{proof}
	Exactly the same proof. It suffices to note that we can apply the maximum principle to $t\abs{\nabla f}^2 +f^2$ for reason discussed above.
\end{proof}

Let $\Omega$ be some domain of ${S\setminus \set{p_i}}$ away from the singularity. For any $t_i$, set $v_i(x,t)= u(x,t-t_i)$ and $f_i(x,t)=f(x,t-t_i)$. Then $v_i$ is a solution to (\ref{eqn:rf}) on $\Omega\times [-1,0]$ and $f_i$ is still the potential function in the sense that if $R_i$ is the scalar curvature of $e^{2v_i} g_0$,
\begin{equation*}
	e^{-2v_i} \tilde{\triangle} f_i=R_i
\end{equation*} 
and $f_i$ solves
\begin{equation*}
	\partial_t f_i = e^{-2v_i} \tilde{\triangle} f_i.
\end{equation*}
Moreover, we have $C^k$ uniform bound of $v_i$ and $f_i$ over $\Omega' \times [-1/2,0]$ for some smaller domain $\Omega'$. Passing to the limit, denote the limit by $v_\infty$ and $f_\infty$ respectively. Thanks to Lemma \ref{lem:dk}, $f_\infty$ is constant in space. The limit equation of $f_\infty$ then implies that $f_\infty$ is constant in $\Omega'\times [-1/2,0]$.  Hence, the limit metric $e^{2v_\infty} g_0$ has constant curvature zero.

Finally, if we have two sequence of $t_i$ going to infinity and obtain two limit metrics $e^{2u_1}g_0$ and $e^{2u_2}g_0$, we need to show that they are the same. Note that both metrics are flat away from the singularity and both $u_1$ and $u_2$ are bounded with bounded energy. Hence,
\begin{equation*}
	-\tilde{K}= -\tilde{\triangle} u_1=-\tilde{\triangle} u_2.
\end{equation*}
The difference is a constant and the constant must be zero, since the two limit metrics have the same volume. The proof to Theorem \ref{thm:main} is done.

\appendix

\section{}
We discuss in this appendix some results related to the failure of compatibility condition for a parabolic equation with Neumann boundary condition. These results are used several times in the proof of this paper. However, it is not easy to find references for them. For completeness, we prove them here.

To be precise, we consider on a manifold with boundary $M$ the following parabolic equation
\begin{equation}
	\pfrac{u}{t}= a(x,t) \triangle u + b(x,t) u + f
	\label{app:linear}
\end{equation}
with initial condition
\begin{equation*}
	u(0)= u_0\qquad \mbox{on } M
\end{equation*}
and boundary condition
\begin{equation*}
	\pfrac{u}{\nu}|_{\partial M}=0.
\end{equation*}

By the compatibility condition, we mean 
\begin{equation}\label{app:compatible}
	\pfrac{u_0}{\nu}|_{\partial M}=0.
\end{equation}
Obviously, this is a necessary condition for the solution $u$ to be $C^1$ at the corner $\partial M\times \set{0}$. When this fails, we should be very careful about the regularity of solutions to (\ref{app:linear}). In this appendix, we discuss several aspects of this equation.

\subsection{Solvability and Schauder estimates}
Assuming (\ref{app:compatible}), we have the usual solvability and regularity result,
\begin{thm}[Schauder estimate with compatibility condition]\label{thm:schauder}
	Let $a,b,f$ be functions in $C^\alpha(M\times [0,T])$ for some $T>0$ and assume that $0<\lambda <a(x,t)<\lambda ^{-1}$. If $u_0$ in $C^{2,\alpha}(M)$ satisfies (\ref{app:compatible}), then there exists $u$ solving (\ref{app:linear}) with $\pfrac{u}{\nu}|_{\partial M}=0$ and $u(x,0)=u_0$. Moreover, $u$ is in $C^{2,\alpha}(M\times [0,T])$ and we have
	\begin{equation}
		\norm{u}_{C^{2,\alpha}(M\times [0,T])}\leq C \left( \norm{f}_{C^{\alpha}(M\times [0,T])} + \norm{u_0}_{C^{2,\alpha}(M)} \right).
		\label{app:schauder}
	\end{equation}
	Here $C$ depends on  $\lambda$ and $C^\alpha$ norm of $a$ and $b$.
\end{thm}
This theorem is the starting point of our discussion. It is a special case of Theorem 5.18 of \cite{Lie}.

When (\ref{app:compatible}) fails, we can still prove the existence of some solution to (\ref{app:linear}), which is not $C^1$ near the corner. However, we do have some regularity at $t=0$, which is needed for the proof of this paper.

\begin{thm}\label{app:good}
	Let $a,b,f$ and $M$ be as above. For $u_0$ in $C^{2,\alpha}(M)$ not satisfying (\ref{app:compatible}), there is a function $u$ such that

	(1) $u$ is $C^{2,\alpha}$ in $(M\times [0,T])\setminus (\partial M\times \set{0})$ and $u$ satisfies (\ref{app:linear}) pointwisely in this domain;

	(2) $u(x,0)=u_0$ and for any $t>0$, $\pfrac{u}{\nu}|_{\partial M}=0$;

	(3) When $t\to 0$, $u(t)$ converges to $u_0$ in $C^0(M)$ norm;

	(4) If $\int_M \abs{\nabla b}^2 + \abs{\nabla f}^2 dV$ is bounded for $t\in [0,T]$, then when $t\to 0$, we also have $\norm{u(t)-u_0}_{W^{1,2}(M)}\to 0$ .
\end{thm}

The proof of this theorem depends on an approximation of $u_0$. We claim that there exists a sequence of $C^{2,\alpha}(M)$ functions $u_l$ such that
\begin{equation*}
	\lim_{l\to \infty} \norm{u_l-u_0}_{C^0(M)} + \norm{u_l-u_0}_{W^{1,2}(M)}+ \norm{u_l-u_0}_{C^{2,\alpha}(W)}=0
\end{equation*}
for any compact set $W$ in $M\setminus \partial M$ 
and 
\begin{equation*}
	\pfrac{u_l}{\nu}|_{\partial M}=0.
\end{equation*}

To see this, let $\rho$ be the distance to $\partial M$. A neighborhood of $\partial M$ in $M$ is parametrized by coordinates $(\rho,y)$ for $0\leq \rho < \rho_0$ and $y\in \partial M$. For any $\epsilon>0$, we define smooth functions $\eta_\epsilon: [0,\rho_0)\to [0,\rho_0)$ satisfying: (a) $\eta_\epsilon(\rho)=0$ for $0\leq \rho\leq \epsilon/3$; (b) $\eta_\epsilon(\rho)=\rho$ for $\rho> \frac{2}{3}\epsilon$; (c) $\eta_\epsilon'(\rho)\leq 3$; (d) $\abs{\eta_\epsilon''(\rho)}\leq 12 \epsilon^{-1}$.

We then have a family of diffeomorphism $\Psi_\epsilon$, which is identity away from the above mentioned neighborhood of $\partial M$ and is
\begin{equation*}
	\Psi_\epsilon(\rho,y)=(\eta_\epsilon(\rho),y)
\end{equation*}
in it. 

We set $u_l= u_0\circ \Psi_{1/l}$. So near $\partial M$,  we have
\begin{equation*}
	u_l(\rho,y)= u_0 (\eta_{1/l}(\rho),y).
\end{equation*}
We check that $u_l$ is the sequence we need for the claim. Since $u_0$ is in $C^{2,\alpha}(M)$, the only nontrivial part is about the continuity of $W^{1,2}$ norm. For that purpose, we compute
\begin{eqnarray*}
	&& \norm{\nabla u_l(\rho,y)-\nabla u_0(\rho,y)}_{L^2} \\
	&\leq & \norm{\nabla_y u_0 (\eta_{1/l}(\rho),y)-\nabla_y u_0(\rho,y)}_{L^2} + \norm{\partial_\rho u_0(\rho,y)- \partial_\rho u_0(\rho,y) \eta'_{1/l}(\rho)}_{L^2} \\
	&& + \norm{ (\partial_\rho u_0(\rho,y) -\partial_\rho u_0(\eta_{1/l}(\rho),y)) \eta'_{1/l}(\rho)}_{L^2}.
\end{eqnarray*}
When $l\to \infty$, all three terms vanish because $\nabla u_0$ is bounded, the volume of the set on which $\eta_{1/l}(\rho)\ne \rho$ goes to zero and the fact that $\eta'_{1/l}\leq 3$. 

We then apply Theorem \ref{thm:schauder} for each $u_l$ to get a solution $u_l(t)$ to (\ref{app:linear}) with intial value $u_l$. The point is that $u_l$ satisfies the compatibility condition. We can apply the usual Schauder estimates for parabolic equation with Neumann boundary condition on domains away from $\partial M\times \set{0}$ to see that $u_l$ converges to $u$, which satisfies (1) and (2).

To see (3), we subtract the equations of $u_{l_1}$ and $u_{l_2}$ to get
\begin{equation*}
	\pfrac{}{t}(u_{l_1}-u_{l_2})= a(x,t) \triangle (u_{l_1}-u_{l_2}) + b(x,t) (u_{l_1}-u_{l_2})
\end{equation*}
and
\begin{equation*}
	\pfrac{}{\nu} (u_{l_1}-u_{l_2})|_{\partial M}=0.
\end{equation*}
Now the classical maximum principle for parabolic equation with Neumann boundary condition (see the proof of Proposition 3.2 of \cite{mine}) implies that
\begin{equation}\label{eqn:cauchy}
	\norm{u_{l_1}-u_{l_2}}_{C^0(M\times [0,T])}\leq C \norm{u_{l_1}-u_{l_2}}_{C^0(M)}.
\end{equation}
Therefore $u_l$ converges in $C^0(M\times [0,T])$ to $u$ and (3) follows.

(4) is proved by a similar argument with the maximum principle replaced by the computation in Lemma \ref{lem:energy}.

\begin{rem}
	In fact, the proof of Proposition 3.2 of \cite{mine} is not rigorous without taking (3) for granted. The argument we present above fills this gap. Similarly, we have assumed (4) in the proof of Lemma \ref{lem:energy} and we see now that this assumption can be removed.
\end{rem}

\begin{rem}\label{rem:continuous}
Let $u_0'$ be another $C^{2,\alpha}(M)$ function and $u'(t)$ is the solution given by Theorem \ref{app:good}. Since $u$ and $u'$ are obtained as uniform limit of $u_{l_i}$ and $u_{l_i}'$, (\ref{eqn:cauchy}) implies
\begin{equation}\label{eqn:cauchy2}
	\norm{u-u'}_{C^0(M\times [0,T])} \leq C \norm{u_0-u_0'}_{C^0(M)}.
\end{equation}
Making use of this, we can approximate continuous functions on $M$ by $C^{2,\alpha}$ functions so that for each continuous $u_0$, there is a solution $u(t)$ to (\ref{app:linear}) which is in $C^{2,\alpha}(M\times (0,T]) \cap C^0(M\times [0,T])$. It also follows from (\ref{eqn:cauchy2}) the solution is unique in the class of $C^0(M\times [0,T])$.
\end{rem}

\begin{rem}
	\label{rem:maximum}
	If $u$ is a solution to (\ref{app:linear}) with compatible initial function, the a maximum principle as Proposition 3.2 of \cite{mine} holds. If $u$ is a solution provided by Theorem \ref{app:good}, the same maximum principle holds because it is uniform limit of sequences satisfying compatible condition. Moreover, if $u_0$ is just contunious and $u$ is the solution in Remark \ref{rem:continuous}, for the same reason, the maximum principle holds.
\end{rem}

\subsection{Semi-linear equations}

In Lemma \ref{lem:wehaverk}, we claim that there exists a local solution to
\begin{equation}\label{app:rk}
  \left\{
  \begin{array}[]{l}
	  \partial_t R_k=e^{-2u} \tilde{\triangle} R_k + R_k(R_k-r) \\
    \pfrac{R_k}{n}|_{\partial S_k}=0 \\
    R_k(x,0)=2K_0.
  \end{array}
  \right.
\end{equation}
If the compatibility condition
\begin{equation}\label{app:k}
	\pfrac{K_0}{\nu}|_{\partial S_k}=0,
\end{equation}
holds, then this claim follows from the linear estimate (for example, Theorem \ref{thm:schauder}) and a routine iteration method. 

In our case, (\ref{app:k}) may not be true. Hence, we need a workaround by approximation as above. As before, we approximate $K_0$ by a sequence $K_0^l=K_0\circ \Psi_{1/l}$ satisfying 
\begin{equation*}
	\lim_{l\to \infty} \norm{K_0^l-K_0}_{C^0(S_k)} + \norm{K_0^l-K_0}_{W^{1,2}(S_k)}+ \norm{K_0^l-K_0}_{C^{2,\alpha}(W)}=0
\end{equation*}
for any compact set $W$ in $S_k\setminus \partial S_k$ 
and 
\begin{equation*}
	\pfrac{K_0^l}{\nu}|_{\partial S_k}=0.
\end{equation*}
With $K_0^l$ as initial values, we can find solutions $R_k^l$ for (\ref{app:rk}) defined on $[0,T_k^l]$ for some $T_k^l>0$. $R_k^l$ exists as long as it is bounded. 

To get a uniform $C^0$ bound (both uniform in $l$ and $k$), we consider the ODE
\begin{equation*}
	h'(t)= h(h-r), \qquad h(0)=\max \abs{R_0}+1> \max\abs{R_k^l}.
\end{equation*}
Since $R_k^l-h$ satisfies the compatibility condition, we can apply the classical maximum principle to the equation astisfied by $R_k^l-h$ to get
\begin{equation*}
	R_k^l\leq h
\end{equation*}
for $t\in [0,T_l]$. A lower bound follows similarly.

Let $T'>0$ be the positive number such that $\max_{t\in [0,T']} h(t)\leq 4 h(0)+ 1$. Then we have $T_k^l\geq T'$ and the uniform $C^0$ estimates for $R_k^l$.

For the uniform energy estimates, we compute
	\begin{eqnarray*}
	\frac{d}{dt} \int_{S_k} \abs{\tilde{\nabla} {R_k^l} }^2 d\tilde{V} &=& 2\int_{S_k} \tilde{\nabla} (\partial_t R_k^l) \cdot \tilde{\nabla} R_k^l d\tilde{V} \\
	&=& -2\int_{S_k} \partial_t R_k^l \tilde{\triangle} R^l_k d\tilde{V} \\
	&=& -2\int_{S_k} e^{-2u}( \tilde{\triangle} R^l_k)^2  d\tilde{V} + \int_{S_k} \tilde{\nabla} (R^l_k(R^l_k-r))\tilde{\nabla} R^l_k d\tilde{V}\\
	&\leq& C \int_{S_k} \abs{\tilde{\nabla} R_k^l}^2 d\tilde{V}.
\end{eqnarray*}
Since the approximation of $K_0$ by $K_0^l$ is strong in $W^{1,2}$, we can integrate this to get the uniform energy bound.

\subsection{Proof of inequality (\ref{eqn:appendix})}\label{sub:26}
This section is devoted to the proof of inequality (\ref{eqn:appendix}). We are slightly more general by considering $M$ instead of $S_k$ and including a linear term $bu$ in the linear equation (\ref{app:linear}).

Since this is a property of linear parabolic equation when compatibility condition fails, it may be of some general interests and we summarize it in the form of a lemma.

\begin{lem}\label{lem:ut}
	Let $M$ be any manifold with boundary and assume $a,b,f$ and $\partial_t a, \partial_t b, \partial_t f$ are $C^{\alpha}$ functions on $M$, $u_0\in C^{2,\alpha}(M)$ and $0<\lambda<a<\lambda^{-1}$. If $u$ is the solution to (\ref{app:linear}) given by Theorem \ref{app:good} and $v$ is the solution (again by Theorem \ref{app:good}) to the equation
\begin{equation}\label{app:ut}
	\pfrac{v}{t}= a\tilde{\triangle} v + \tilde{b} v+ \tilde{f}
\end{equation}
with $v(0):=v_0=a_0\tilde{\triangle} u_0 + b_0 u_0 + f_0$,
where $a_0,b_0$ and $f_0$ are short for $a(x,0), b(x,0)$ and $f(x,0)$,
\begin{equation*}
	\tilde{b}= \partial_t a \cdot a^{-1} + b
\end{equation*}
and
\begin{equation*}
	\tilde{f}= - \partial_t a a^{-1} bu - \partial_t a^{-1} f + \partial_t b u + \partial_t f,
\end{equation*}
then
\begin{equation*}
	\int_{M\times [0,T]} \abs{\pfrac{u}{t}-v} dV dt \leq C \int_{\partial M} \abs{\pfrac{u_0}{\nu}} dV_{\partial M}.
\end{equation*}

\end{lem}
Obviously, $\pfrac{u}{t}$ satisfies (\ref{app:ut}) with the same initial and boundary value conidtion as $v$.

The following remark is not needed for the proof. However, it explains why we need this lemma.
\begin{rem}
	In general, $\pfrac{u}{t}$ is not bounded near the corner $\partial M\times \set{0}$. To see this, we assume that $\abs{\pfrac{u_0}{\rho}}>c>0$ in a neighborhood of $\partial M$, where $\rho$ is the distance to $\partial M$. If $\partial_t u$, hence $\tilde{\triangle} u$, is uniformly bounded (up to $t=0$), then we apply $L^p$ estimate (any $p>1$) and Sobolev embedding to $u$ on $M$ to see
\begin{equation*}
	\norm{\nabla u(t)}_{C^\alpha(M)}\leq C.
\end{equation*}
When $t$ is small but positive, this is a contradiction because for some $y$ with $d(y,\partial M)^\alpha < \frac{c}{2\norm{\nabla u(t)}}_{C^\alpha(M)}$, we have
\begin{equation*}
	\lim_{t\to 0} \abs{\pfrac{u}{\rho}}(y)\geq c/2,
\end{equation*}
but $\pfrac{u}{\nu}(z,t)=0$ for $z\in \partial M$ is the nearest point on $\partial M$ to $y$ and $t>0$.

This shows that although $\partial_t u$ is a solution to (\ref{app:ut}), we can not obtain good estimates for it due to the failure of compatibility condition.
\end{rem}

\begin{rem}\label{B:ok}
	To begin the proof, we assume that 
	\begin{equation*}
		\pfrac{a}{\nu}|_{\partial M} = \pfrac{b}{\nu}|_{\partial M} =\pfrac{f}{\nu}|_{\partial M}=0.
	\end{equation*}
	The reason is that we only require them to be in $C^\alpha (M\times [0,T])$. We can approximate them in this space by functions satisfying the above condition.
\end{rem}

Recall that $\pfrac{u}{t}$ is the limit of $\partial_t u_l(x,t)$, where $u_l(x,t)$ is the solution to (\ref{app:linear}) with initial condition
\begin{equation*}
	u_l(x,0)= u_0\circ \Psi_{1/l}.
\end{equation*}
We claim that
\begin{equation}\label{B:compatible}
	\partial_\nu \left( a_0 \tilde{\triangle} (u_0\circ \Psi_{1/l}) + b_0 u_0\circ \Psi_{1/l} + f_0 \right) =0
\end{equation}
at $\partial M$. 
By Remark \ref{B:ok}, it sufficies to check
\begin{equation*}
	\partial_\nu \tilde{\triangle} (u_0(\eta_{1/l}(\rho),y))=0
\end{equation*}
for $\rho=0$. This follows from the formula
\begin{equation}
	\tilde{\triangle}= \frac{\partial^2}{ \partial \rho^2} + H \cdot \pfrac{}{\rho} + \triangle_\rho,
	\label{eqn:Laplace}
\end{equation}
where $H$ and $\triangle_\rho$ are the mean curvature and the Laplace of the submanifold defined by $\rho=\mbox{constant}$. As a consequence of the claim, the compatibility condition holds for $\partial_t u_l$, which is a solution of (\ref{app:ut}). 

On the other hand, $v$ is also the limit of a sequence of $v_l$, which solves (\ref{app:ut}) with compatible initial value
\begin{equation*}
	v_l(x,0)= (a_0 \tilde{\triangle }u_0 + b_0 u_0 + f_0) \circ \Psi_{1/l}.
\end{equation*}

Now, since $\partial_t u_l (x,t)- v_l(x,t)$ satisfies (\ref{app:ut}) with the compatibility condition, we claim
\begin{equation}\label{eqn:end}
	\norm{\partial_t u_l - v_l}_{L^1(M\times [0,T])}\leq C \norm{ \partial_t u_l (x,0) - v_l(x,0)}_{L^1(M)} + o(1),
\end{equation}
whose proof is postponed to the end of this section. Here $o(1)$ is an infinitesimal quantity when $l$ goes to infinity.

Since $v_l$ converges to $v$ uniformly and $\partial_t u_l$ converges to $\pfrac{u}{t}$ uniformly on any compact set away from the corner, we know
\begin{equation*}
	\norm{\pfrac{u}{t}-v}_{L^1(M\times [0,T])}\leq C \lim_{l\to \infty} \norm{\partial_t u_l (x,0)- v_l(x,0)}_{L^1(M)}.
\end{equation*}

Obviously,
\begin{equation*}
	\norm{f_0 - f_0\circ \Psi_{1/l}}_{L^1(M)}=o(1)
\end{equation*}
and
\begin{equation*}
	\norm{b_0 u_l  - (b_0 u_0)\circ\Psi_{1/l}}_{L^1(M)}=o(1).
\end{equation*}
Since $\tilde{\triangle} u_0$ is bounded, we have
\begin{eqnarray*}
	&& \norm{a_0 \tilde{\triangle} u_l - (a_0 \tilde{\triangle} u_0)\circ \Psi_{1/l}}_{L^1(M)} \\
	&\leq& o(1) + C \norm{\tilde{\triangle} (u_0\circ \Psi_{1/l}) - (\tilde{\triangle} u_0)\circ \Psi_{1/l}} _{L^1(M)}.
\end{eqnarray*}

Since $u_0$ is in $C^{2,\alpha}(M)$ and $\Psi_{1/l}$ is identity map away from a small neighborhood of $\partial M$, we have
\begin{equation}
	\norm{\triangle_\rho (u_0\circ \Psi_{1/l})- (\triangle_\rho u_0)\circ \Psi_{1/l} }_{L^1(M)}=o(1).
	\label{eqn:good1}
\end{equation}
Similarly, $H$ and $\pfrac{u_0}{\rho}$ are bounded on $M$, which implies
\begin{eqnarray*}
	&&\norm{H\cdot \pfrac{}{\rho} (u_0\circ \Psi_{1/l}) - (H\cdot \pfrac{}{\rho} u_0) \circ \Psi_{1/l}}_{L^1(M)} \\
	&\leq&  C \norm{\pfrac{u_0}{\rho}(\eta_{1/l}(\rho),y)\eta'_{1/l}(\rho)}_{L^1(\set{\rho\leq 1/l})} + C \norm{\pfrac{u_0}{\rho}(\eta_{1/l}(\rho),y)}_{L^1(\set{\rho\leq 1/l})} \\
	&\leq& o(1).
\end{eqnarray*}
The essential part is
	\begin{eqnarray*}
		&& \norm{\frac{\partial^2}{\partial \rho^2} (u_0\circ \Psi_{1/l})- (\frac{\partial^2}{\partial \rho^2} u_0) \circ \Psi_{1/l}}_{L^1(\set{\rho\leq 1/l})} \\
		&\leq& C \norm{\pfrac{u_0}{\rho}(\eta_{1/l}(\rho),y) \eta''_{1/l}(\rho)}_{L^1(\set{\rho\leq 1/l})} + C\norm{(\frac{\partial^2 u_0}{\partial \rho^2}) (\eta_{1/l}(\rho),y)}_{L^1(\set{\rho\leq 1/l})}\\
		&\leq& C \int_{\partial M} \abs{\pfrac{u_0}{\nu}} dV_{\partial M} + o(1).
	\end{eqnarray*}

In summary, we have shown
\begin{equation*}
	\norm{\pfrac{u}{t}-v}_{L^1(M\times [0,T])}\leq C \int_{\partial M} \abs{\pfrac{u_0}{\nu}} dV_{\partial M}.
\end{equation*}

To conclude this section, we prove (\ref{eqn:end}). If we write $h_l= \partial_t u_l -v_l$, then we have
\begin{equation}\label{eqn:hl}
	\partial_t h_l = a \tilde{\triangle} h_l + \tilde{b} h_l + \bar{f}
\end{equation}
where
\begin{equation*}
	\tilde{b}= \partial_t a \cdot a^{-1} + b
\end{equation*}
and
\begin{equation*}
	\bar{f}= \left( \partial_t a\cdot a^{-1} b -\partial_t b \right)(u-u_l).
\end{equation*}
By the assumption, we know $\tilde{b}$ is bounded on $M\times [0,T]$ and by Theorem \ref{app:good}, we know $\norm{\bar{f}}_{C^0(M\times [0,T])}$ goes to zero as $l\to \infty$. 

We can write $h_l(x,0)=h_+-h_-$ for non-negative continuous functions $h_+$ and $h_-$. Similarly, $\bar{f}=f_+-f_-$. Let $h_{\pm}$ be the solution of (see Remark \ref{rem:continuous})
\begin{equation*}
	\partial_t h_{\pm} = a\tilde{\triangle} h_{\pm} + \tilde{b} h_{\pm} + f_{\pm}
\end{equation*}
with
\begin{equation*}
	\pfrac{h_\pm}{\nu}|_{\partial M}=0
\end{equation*}
and
\begin{equation*}
	h_\pm(x,0)= h_\pm.
\end{equation*}
As argued in Remark \ref{rem:maximum}, $h_\pm$ remains non-negative. By the uniqueness of continuous solution to (\ref{eqn:hl}), we have $h_l=h_+-h_-$.  To estimate $h_+$, we compute
\begin{eqnarray*}
	&& \frac{d}{dt} \int_{M} h_+ \cdot a^{-1} dV \\
	&\leq &  \int_{M} \tilde{\triangle} h_+ dV + \int_M h_+ \left( \partial_t (a^{-1}) + a^{-1}\tilde{b} \right) + a^{-1} \bar{f} dV \\
	&\leq& C \int_M h_+ dV + C \norm{\bar{f}}_{C^0(M\times [0,T])}.
\end{eqnarray*}

Integrating this inequality, we get for $t\in [0,T]$
\begin{equation*}
	\int_M h_+(t) dV \leq C(T) ( \norm{h_+(0)}_{L^1(M)} + \norm{\bar{f}}_{C^0(M\times [0,T])}) .
\end{equation*}
Similar results hold for $h_-$. This finishes the proof of (\ref{eqn:end}).

\end{document}